
\documentclass[10pt,reqno]{amsart}
  \usepackage{geometry}
  \geometry{paperwidth=176mm, paperheight=250mm, textheight=189mm,
            tmargin=35mm, lmargin=30mm, rmargin=20mm, headsep=8mm,
            headheight=11.0pt, footskip=30pt, twoside=true}

\usepackage{amssymb,upref,enumerate}
\usepackage{tikz}
\usepackage{mathabx}
\usepackage{amsmath,amssymb,yfonts,amscd,amsthm,amsxtra,mathrsfs}
\usepackage{latexsym}
\usepackage{soul}
\usepackage[T1]{fontenc}
\usepackage{tgtermes}
\usepackage[latin9]{inputenc}

\usepackage{cite} 

\usepackage[
colorlinks=true,
linkcolor=blue,
citecolor=blue,
urlcolor=blue,
pagebackref,
]{hyperref}

\makeatletter
\def\Ddots{\mathinner{\mkern1mu\raise\p@
\vbox{\kern7\p@\hbox{.}}\mkern2mu
\raise4\p@\hbox{.}\mkern2mu\raise7\p@\hbox{.}\mkern1mu}}
\makeatother

\def\Xint#1{\mathchoice
{\XXint\displaystyle\textstyle{#1}}%
{\XXint\textstyle\scriptstyle{#1}}%
{\XXint\scriptstyle\scriptscriptstyle{#1}}%
{\XXint\scriptscriptstyle\scriptscriptstyle{#1}}%
\!\int}
\def\XXint#1#2#3{{\setbox0=\hbox{$#1{#2#3}{\int}$}
\vcenter{\hbox{$#2#3$}}\kern-.5\wd0}}

\def\dashint{\Xint-}

\def\avgint{\Xint-}

\newcommand{\vertiii}[1]{{\left\vert\kern-0.25ex\left\vert\kern-0.25ex\left\vert #1 
    \right\vert\kern-0.25ex\right\vert\kern-0.25ex\right\vert}}


\newtheorem{theorem}{Theorem}[section]

  \theoremstyle{plain}
  
  \theoremstyle{plain}

\newtheorem{corollary}[theorem]{Corollary}

\theoremstyle{definition}

\newtheorem{lemma}[theorem]{Lemma}

\newtheorem{remark}[theorem]{Remark}



\newcommand{\strt}[1]{\rule{0pt}{#1}} 

\newcommand{\X}{\mathcal{S}}
\newcommand{\dif}{\mathrm{d}} 

\setlength{\topmargin}{-0.2in} \setlength{\textwidth}{5in}

\def\al{{\alpha}}
\def\R{\mathbb R}
\def\N{\mathbb N}

\def\Z{\mathbb Z}
\def\Sn{\mathbb S}

\def\ra{\rightarrow}

\def\bey{\begin{eqnarray*}}
\def\eey{\end{eqnarray*}}

\allowdisplaybreaks

\newcommand{\cpm}{\color{magenta}} 

\newcommand{\ed}{\color{black}} 

\begin{document}

\title[A new look at the subrepresentation formulas]{A new look at the subrepresentation formulas}

\author{Cong Hoang}

\author{Kabe Moen}

\author{Carlos Perez}

\address{Carlos P\'erez \\
 Department of Mathematics \\
 University of the Basque Country \\
 Ikerbasque and BCAM \\
 Bilbao, Spain\\ \textsf{cperez@bcamath.org}}

\address{Kabe Moen \\
 Department of Mathematics \\
 University of Alabama \\
 Tuscaloosa \\
 Alabama, USA, \\ \\ \textsf{kabe.moen@ua.edu}}

\address{Cong Hoang \\
 Department of Mathematics \\
 Florida A\&M University \\
 Tallahassee \\
 Florida, USA \\ \textsf{cong.hoang@famu.edu}}

\thanks{ 
C. P.  is supported by grant  PID2020-113156GB-I00, Spanish Government; by the Basque Government through grant 
IT1615-22 and the BERC 2014-2017 program and by BCAM Severo Ochoa accreditation SEV-2013-0323, Spanish Government.  K.M. is supported by the Simons Foundation grant number 160427. }


\vspace{-1cm}

\begin{abstract} 

We extend the subrepresentation formula 
\begin{equation*}
|f(x)|\le c_n\,I_1(|\nabla f|)(x)
\end{equation*}
in several ways.  First, we consider more general $A_1$-potential operators on the right-hand side and prove local and global pointwise inequalities for these operators.  Second, we show that we can improve the right-hand side using fractional derivatives.  Finally, we extend our results to rough singular integral operators, similar to the main result in \cite{HMP1}.  

\end{abstract}

\maketitle


%

\section{Motivation}

 It is well known that  the pointwise inequality in $\R^n$, $n\geq2$,
\begin{equation}\label{ptwiseglobal}
|f(x)|\le c_n\,I_1(|\nabla f|)(x),
\end{equation}
plays a central role in relating a function and its smoothness by $L^p$ type estimate. Recall here that 
$I_1$ is the Riesz potential operator of order $\al=1$ given by 
$$I_\al f(x)=\int_{\R^n}\frac{f(y)}{|x-y|^{n-\al}}\,dy, \quad 0<\al<n.
$$
This estimate goes back to Sobolev and gives rise to the Gagliardo-Nirenberg-Sobolev inequality 
$$\|f\|_{L^{p^*}(\R^n)}\leq C\,\|\nabla f\|_{L^p(\R^n)},$$
when $f\in C_c^1(\R^n),$ $1\leq p<n$, and $\frac1{p^*}=\frac1p-\frac1n.$
Moreover, local versions of \eqref{ptwiseglobal}, like, 
$$|f(x)-f_B|\leq C\,I_1(\mathbf 1_B|\nabla f|)(x), \qquad x\in B,$$
are paramount for studying the Poincar\'e-Sobolev inequalities
$$\left(\dashint_B |f(x)-f_B|^q\,dx\right)^{\frac1q}\leq C\,r(B)\left(\dashint_B|\nabla f|^p\,dx\right)^{\frac1p},$$ 
 where $B$  could be  euclidean ball in $\R^n$, or other type of domains.  
With this result as a model, many similar results can be found where the gradient operator $\nabla$ is substituted with generalized gradients, and the Riesz potential operator is replaced by other types of potential operators. 
More precisely, one can bound the norms of $f$ by the norms of $Xf$ provided there is an integral operator $T$ which is bounded on appropriate weighted $L^p$ spaces and for which the pointwise estimate 
$$|f(x)|\leq c\,T(|Xf(x)|)$$
is valid. Such inequality is called a subrepresentation.  For example, this pointwise estimate is known
to hold for vector fields of H\"ormander type when $T$ is given by
\[
Tf(x)=\int_{\R^n} \frac{d(x,y)}{|B(x,d(x,y))|}\,f(y)\,dy,
\]
where $d(x,y)$ is the associated Carnot--Carath\'eodory metric, $B(x,r)$ denotes the metric ball with center $x$ and radius $r$, and $|B(x,r)|$ is the Lebesgue measure of the ball.

More generally, there are "potential type operators" within the context of doubling metric measure spaces $(\X,d,\mu)$  of the form 
\begin{equation}
Tf(x)=  \int_{\X} K(x,y)\,f(y)\, d\mu(y),
\label{potential}
\end{equation}
where the kernel $K(x,y)$ is non-negative and satisfying appropriate estimates.  
An important class of examples with metrics other than the usual
Euclidean metric consists of potential operators related to the
regularity of subelliptic differential equations. In particular,
vector fields of H\"ormander type (see \cite{H}) as well as the
classes of non-smooth vector fields studied in \cite{FL} lead to
integral operators of the type we will study. In addition, the
differential operators of Grushin type considered in \cite{FGuW}
(at least in the simplest case of Lebesgue measure) are related to
integrals of type (\ref{potential}). In fact, for all these
examples, the associated potential operator has the form
\begin{equation}
Tf(x)= \int_{\X}\frac{d(x,y)}{\mu(B(x,d(x,y)))}\, f(y)\,d\mu(y),
\label{subelliptic}
\end{equation}
where $d(x,y)$ is a distance function that is naturally related to the vector fields. For a nice survey on these operators we refer the reader to the work of Kairema \cite{AK}.

Motivated by these results, we propose improvements of \eqref{ptwiseglobal} by considering measures $\mu$ induced by a non-negative locally Lebesgue integrable function $w$. More precisely, we will consider potential type operators $T_w$ defined by %
\begin{equation}\label{PotA1}
T_w f(x) :=\int_{\R^n} \frac{|x-y|}{w(B(x,|x-y|))}  f(y)\,w(y)\,dy,
\end{equation}
where $w(B)=\int_Bw(x)\,dx$. We shall also consider the fractional version of $T_w$ defined by
$$T_{w,\al} f(x) :=\int_{\R^n} \frac{|x-y|^\al}{w(B(x,|x-y|))}  f(y)\,w(y)\,dy,$$
for any $0<\alpha<n$.

 Recall that a weight $w$ is said to be in the class $A_1$ if and only if for any ball $B$,
$$\dashint_B w(x)\,dx \le C\, w(x),$$
for almost every $x\in B$. The least constant $C$ for the above inequality is denoted as $[w]_{A_1}$. 
With weights from $A_1$, we will be able to derive a more general version for \eqref{ptwiseglobal}, namely
$$|f(x)|\leq 
c_{n,w}\, T_w(|\nabla f|)(x).$$
%
%

In addition, it was recently shown by the authors in \cite{HMP1,HMP2} that \eqref{ptwiseglobal} has a powerful extension to operators.  Consider a degree zero homogeneous function $\Omega \in L^1(\mathbb S^{n-1})$. The rough singular integral operator is defined by
$$T_\Omega f(x)=\mathsf{p.v.}\int_{\R^n}\frac{\Omega(y)}{|y|^n}f(x-y)\,dy,$$
and its associated maximal truncation operator is given by
$$T^\star_{\Omega} f(x)=\sup_{t>0}\left|\int_{|y|>t}\frac{\Omega(y)}{|y|^{n}}f(x-y)\,dy\right|.$$
It is well-known that the size of $\Omega$ determines the boundedness properties of the operators $T_\Omega$ and $T^\star_\Omega$.  It was shown in \cite{HMP1} that 
\begin{equation}\label{Tstarbound} T^\star_{\Omega} f(x)\leq 
c_{n,\Omega}\, I_1(|\nabla f|)(x)\end{equation}
when $\Omega\in L^{n,\infty}(\mathbb S^{n-1})$.  We will extend this estimate to $T_w$, namely,
$$T^\star_\Omega f(x)\leq c_{n,w,\Omega}\, T_w(|\nabla f|)(x),$$
for an appropriate class of weights $w$. 

Another approach to improve inequality \eqref{ptwiseglobal} involves exploring smaller operators that could replace $I_1(|\nabla \cdot|)$ on the right-hand side. One promising candidate is $I_\alpha(\mathcal{D}^\alpha)$ where $\mathcal{D}^\alpha$ is a nonlinear fractional differential operator. The fractional derivative operator, $|\nabla|^\alpha=(-\Delta)^{\alpha/2}$, is defined using the Fourier transform for $f\in C_c^\infty(\R^n)$
$$\mathcal{F}(|\nabla|^\al f)(\xi)=|\xi|^{\al}\hat{f}(\xi).$$
For $0<\alpha<2$, the fractional derivative can be realized via the integral formula
$$|\nabla |^\alpha f(x)=c_{n,\alpha}\,\mathsf{p.v.}\int_{\R^n}\frac{f(x)-f(y)}{|x-y|^{n+\alpha}}\,dy$$
(we refer to the book by Ponce \cite[p. 246]{Pon} for a related variant).
When $0 < \alpha < 1$, the above integral converges absolutely, and hence we may define the nonlinear fractional differential operator introduced in \cite{Sp},
$$\mathcal D^{\al}f(x)=\int_{\R^n}\frac{|f(x)-f(y)|}{|x-y|^{n+\al}}\,dy.$$

Indeed, we shall prove that 
\begin{equation}\label{improvedbound}
    T^\star_{\Omega} f(x)\leq C_{n,\al,\Omega}\, 
(1-\alpha)\,I_{\al}(\mathcal D^{\al}f)(x), \qquad f\in C^\infty_c(\R^n), \, 0<\al<1
\end{equation}
which improves the main result in \cite{HMP1} as we shall prove later that 
\begin{align*}
 (1-\alpha)\, I_{\al}(\mathcal D^{\al}f)(x) \leq c_{n,\al}\, I_1(|\nabla f|)(x),
\end{align*}
where the constant $c_{n,\al}$ is bounded by a dimensional constant.

\hfill

\section{The theorems}


When $w\in A_1$, we have an analog of \eqref{ptwiseglobal}, as stated in the theorem below.

\begin{theorem} \label{identity}  Let $w\in A_1$. There exists a dimensional constant $c_n$ such that for any Lipschitz function $f$
that has compact support, we have the global estimate
\begin{equation}\label{ptwiseglobalA1} |f(x)|\leq 
c_n\,[w]_{A_1}\, T_w(|\nabla f|)(x).
\end{equation}
\end{theorem}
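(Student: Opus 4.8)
The plan is to start from the classical pointwise representation for Lipschitz compactly supported $f$,
\begin{equation*}
|f(x)|\le c_n\int_{\R^n}\frac{|\nabla f(y)|}{|x-y|^{n-1}}\,dy = c_n\,I_1(|\nabla f|)(x),
\end{equation*}
which follows from the fundamental theorem of calculus integrated over rays from $x$. So the real content is to dominate $I_1(|\nabla f|)(x)$ by $c_n[w]_{A_1}T_w(|\nabla f|)(x)$ for an arbitrary $A_1$ weight $w$. In other words, writing $g=|\nabla f|\ge 0$, it suffices to prove the kernel-level comparison
\begin{equation*}
\int_{\R^n}\frac{g(y)}{|x-y|^{n-1}}\,dy \le c_n\,[w]_{A_1}\int_{\R^n}\frac{|x-y|}{w(B(x,|x-y|))}\,g(y)\,w(y)\,dy .
\end{equation*}

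First I would decompose both integrals into dyadic annuli $A_k=\{y: 2^k\le |x-y|<2^{k+1}\}$ around $x$. On $A_k$ the Euclidean kernel satisfies $|x-y|^{1-n}\approx 2^{k(1-n)}=2^k/|B(x,2^k)|$ up to a dimensional constant, while in the $T_w$ integrand $|x-y|/w(B(x,|x-y|))\approx 2^k/w(B(x,2^{k+1}))$, again up to a dimensional constant. Hence, on each annulus, the comparison reduces to
\begin{equation*}
\frac{1}{|B(x,2^k)|}\int_{A_k}g(y)\,dy \;\lesssim\; [w]_{A_1}\,\frac{1}{w(B(x,2^{k+1}))}\int_{A_k}g(y)\,w(y)\,dy .
\end{equation*}
This is exactly where the $A_1$ hypothesis enters: for a.e. $y\in B(x,2^{k+1})$ we have $\dashint_{B(x,2^{k+1})}w \le [w]_{A_1}\,w(y)$, equivalently $\dfrac{1}{|B(x,2^{k+1})|}\le [w]_{A_1}\,\dfrac{w(y)}{w(B(x,2^{k+1}))}$. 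Multiplying by $g(y)\ge 0$, integrating over $A_k$, and using $|B(x,2^k)|\approx |B(x,2^{k+1})|$ gives the displayed annular estimate with constant $c_n[w]_{A_1}$.

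Finally I would sum over $k\in\Z$. Since the dyadic annuli $\{A_k\}$ partition $\R^n\setminus\{x\}$ and each of the original integrals decomposes as the corresponding sum over $k$ of the annular pieces (up to fixed dimensional constants in the kernel comparisons), summing the annular inequalities yields
\begin{equation*}
I_1(g)(x)\le c_n\,[w]_{A_1}\,T_w(g)(x),
\end{equation*}
and combining with the classical subrepresentation for $g=|\nabla f|$ gives \eqref{ptwiseglobalA1}. I expect the only delicate point to be the bookkeeping of dimensional constants in the annular kernel comparisons — in particular making sure the shift from $w(B(x,2^k))$ to $w(B(x,2^{k+1}))$ costs only the doubling-free factor $[w]_{A_1}$ and not a bound depending on $w$ through doubling constants; the argument above handles this precisely because the $A_1$ inequality is applied on the \emph{larger} ball $B(x,2^{k+1})$, so no doubling of $w$ is needed at all. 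Everything else is the standard annular decomposition, so there is no real obstacle beyond careful constant tracking.
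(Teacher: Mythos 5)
Your argument is correct, but it runs along a genuinely different track than the paper's. You factor the problem through the classical estimate \eqref{ptwiseglobal}, $|f(x)|\le c_n I_1(|\nabla f|)(x)$, and then prove the pointwise operator domination $I_1(g)(x)\le c_n\,[w]_{A_1}\,T_w(g)(x)$ for every $g\ge 0$: on each annulus $A_k=\{2^k\le|x-y|<2^{k+1}\}$ you bound the Riesz kernel above by $c_n 2^k/|B(x,2^{k+1})|$, apply the $A_1$ inequality on the larger ball $B(x,2^{k+1})$, and bound the $T_w$ kernel below by $2^k/w(B(x,2^{k+1}))$ using only $|x-y|\ge 2^k$ and $B(x,|x-y|)\subseteq B(x,2^{k+1})$; summing over $k$ gives the claim, and indeed no doubling of $w$ is invoked. (One wording caveat: the two-sided "$\approx$" you state for $|x-y|/w(B(x,|x-y|))$ versus $2^k/w(B(x,2^{k+1}))$ is false without doubling; only the lower bound holds, but that is the only direction your argument uses, as your closing remark makes clear.) The paper instead proves \eqref{ptwiseglobalA1} directly, without presupposing the $I_1$ bound: it chains the $(1,1)$ Poincar\'e inequality over nested balls $B_k$ shrinking to $x$ and then uses a self-absorption trick to replace the full balls by the disjoint annuli $B_k\setminus B_{k+1}$ (estimate \eqref{Estimatewithwholes}), which is what lets it insert $|x-y|$ and $w(B(x,|x-y|))$ into the integrand. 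Your route is more modular -- the kernel comparison applies to any nonnegative density, so it also yields Theorem \ref{rough} immediately from \eqref{Tstarbound} -- and it shows that, contrary to the impression left by the paper's remark, the "usual" subrepresentation plus a kernel comparison does suffice here. What the paper's chaining-with-holes argument buys is independence from the classical estimate and direct adaptability to the fractional results (Theorems \ref{identityfractional} and \ref{roughfractional}), where one must start from the fractional Poincar\'e inequality \eqref{PS-BBMspecial} and no analogue of your first step is available off the shelf.
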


\begin{remark} 
 
We remark that \eqref{ptwiseglobalA1} has been obtained using a variant of the usual estimates, where some "holes" have been created to obtain a family of disjoint annuli, as seen in \eqref{Estimatewithwholes} from the proof.  
This family of "holes" allows us to insert the metric length $|x-y|$ inside the integral. It seems that the usual methods do not permit this possibility. 
A similar remark holds for the rest of the theorems stated in the section.
\end{remark}

One can replace the left-hand-side of \eqref{ptwiseglobalA1} by $T^\star_\Omega(f)$, and this yields an improvement for the main result in \cite{HMP1}. 

\begin{theorem} \label{rough} Suppose that $\Omega$ is homogeneous of degree zero with zero average on $\mathbb S^{n-1}$ and belongs to $L^{n,\infty}(\mathbb S^{n-1}).$  Let $w\in A_1$. For any smooth function $f$ with compact support, 
\begin{equation}\label{ptwiseglobalA1-roughSIO} T^\star_{\Omega} f(x)\leq 
c_n \|\Omega\|_{\strt{2ex}L^{n,\infty}(\mathbb S^{n-1})} [w]_{A_1} \, T_w(|\nabla f|)(x)
\end{equation}
where $T_w$ is the potential type operator defined above. 
\end{theorem}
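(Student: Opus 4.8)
The plan is to combine the already-established subrepresentation formula \eqref{ptwiseglobalA1} with the pointwise bound \eqref{Tstarbound} from \cite{HMP1}, using the kernel structure of $I_1$ versus $T_w$. The starting observation is that $I_1$ is nothing but $T_w$ with $w\equiv 1$, i.e. $I_1 g(x)=\int_{\R^n}\frac{|x-y|}{|B(x,|x-y|)|}g(y)\,dy$ up to the dimensional constant $|B(0,1)|^{-1}$. So if we want $T_w(|\nabla f|)$ to dominate $T^\star_\Omega f$, and we already know $T^\star_\Omega f\lesssim I_1(|\nabla f|)$, it suffices to show $I_1 g\lesssim [w]_{A_1}\,T_w g$ pointwise for non-negative $g$ — but that is exactly the $A_1$-content of the method used to prove Theorem \ref{identity}. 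Concretely, I would isolate from the proof of Theorem \ref{identity} the purely potential-theoretic inequality
\begin{equation*}
I_1 g(x)\le c_n\,[w]_{A_1}\,T_w g(x),\qquad g\ge 0,
\end{equation*}
which follows by writing $\frac{|x-y|}{|B(x,|x-y|)|}=\frac{|x-y|}{w(B(x,|x-y|))}\cdot\frac{w(B(x,|x-y|))}{|B(x,|x-y|)|}$ and bounding the last factor: by the $A_1$ condition applied to the ball $B=B(x,|x-y|)$ one has $\frac{w(B)}{|B|}=\dashint_B w\le [w]_{A_1}\,\operatorname*{ess\,inf}_B w\le [w]_{A_1}\,w(y)$ for a.e.\ $y\in B$, and $y\in B(x,|x-y|)$ trivially (the ball has radius exactly $|x-y|$). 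Feeding this into $I_1 g(x)=c_n^{-1}\int \frac{|x-y|}{|B(x,|x-y|)|}g(y)\,dy$ with $g=|\nabla f|$ gives $I_1(|\nabla f|)(x)\le c_n\,[w]_{A_1}\,T_w(|\nabla f|)(x)$.

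With that in hand the proof is immediate: apply \eqref{Tstarbound},
\begin{equation*}
T^\star_\Omega f(x)\le c_{n,\Omega}\,I_1(|\nabla f|)(x)
\le c_n\,c_{n,\Omega}\,[w]_{A_1}\,T_w(|\nabla f|)(x),
\end{equation*}
and track the dependence of $c_{n,\Omega}$ on $\Omega$ — in \cite{HMP1} this constant is $c_n\|\Omega\|_{L^{n,\infty}(\mathbb S^{n-1})}$, which yields exactly the stated bound. For the maximal truncation this is legitimate because \eqref{Tstarbound} is stated precisely for $T^\star_\Omega$ and for $\Omega\in L^{n,\infty}(\mathbb S^{n-1})$ with zero average, matching our hypotheses; smoothness and compact support of $f$ ensure every integral above is finite and the pointwise chain makes sense everywhere.

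The only genuinely delicate point is the a.e.\ versus everywhere issue in the $A_1$ step: the inequality $\dashint_B w\le [w]_{A_1}w(y)$ holds for a.e.\ $y$ in each fixed ball $B$, but in $T_w g(x)$ the ball $B(x,|x-y|)$ varies with $y$, so one must argue that the bad set (union over a suitable countable family of radii, or via Lebesgue density) is still null in $y$ for each fixed $x$; this is routine and is presumably handled the same way inside the proof of Theorem \ref{identity}. I would therefore simply cite that argument rather than repeat it. No other obstruction arises — the heart of the matter, the hard geometric estimate \eqref{Estimatewithwholes} producing the disjoint annuli with "holes," has already been done in Theorem \ref{identity}, and \eqref{Tstarbound} is imported wholesale from \cite{HMP1}.
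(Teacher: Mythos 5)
Your proposal is correct, but it takes a genuinely different route from the paper's own proof. The paper never passes through the final bound \eqref{Tstarbound}: it starts from the intermediate estimate of \cite{HMP1}, namely $T^\star_\Omega f(x)\le c_n\|\Omega\|_{L^{n,\infty}(\mathbb S^{n-1})}\sum_{k\in\Z}r(B_k)\dashint_{B_k}|\nabla f|$ with $B_k=B(x,2^k)$, reruns the ``holes'' argument of Theorem \ref{identity} (now over $k\in\Z$) to replace the balls $B_k$ by the disjoint annuli $B_k\setminus B_{k-1}$, and only then applies the $A_1$ condition on each \emph{fixed} ball $B_k$, using that $r(B_k)\le 2|x-y|$ and $B(x,|x-y|)\subseteq B_k$ when $y\in B_k\setminus B_{k-1}$; this design completely avoids the varying-ball a.e.\ issue you flag. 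You instead factor the theorem as \eqref{Tstarbound} plus the standalone kernel domination $I_1g(x)\le c_n\,[w]_{A_1}\,T_wg(x)$ for $g\ge0$, and that lemma is indeed true: the clean justification is the maximal-function form of $A_1$, $Mw\le[w]_{A_1}w$ a.e., which yields a single null set $N$ such that for every $y\notin N$ and every ball $B$ with $y\in\overline{B}$ one has $\dashint_B w\le[w]_{A_1}w(y)$ (approximate by slightly larger rational balls and pass to the limit). Two small corrections to your write-up: $y$ lies on the \emph{boundary} of $B(x,|x-y|)$, not inside it, so you must enlarge the radius by a factor $1+\epsilon$ (or $2$, at the harmless cost of $2^n$) before invoking $A_1$; and the a.e.\ subtlety is \emph{not} handled ``the same way'' in the proof of Theorem \ref{identity}, which only ever applies $A_1$ on the countable family $\{B_k\}$ and so never meets it. What each approach buys: yours is more modular -- the same one-line kernel comparison re-derives Theorem \ref{identity} from \eqref{ptwiseglobal}, and its fractional analogue $I_\alpha g\le c_n[w]_{A_1}T_{w,\alpha}g$ would likewise give Theorems \ref{identityfractional} and \ref{roughfractional} from the unweighted statements -- whereas the paper's route is self-contained at the level of the dyadic sums, needs only the intermediate (not the final) estimate from \cite{HMP1}, and keeps every use of $A_1$ on fixed balls, which is precisely the role of the ``holes'' estimate \eqref{Estimatewithwholes}. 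Since the \cite{HMP1} constant is $c_n\|\Omega\|_{L^{n,\infty}(\mathbb S^{n-1})}$, both arguments produce exactly \eqref{ptwiseglobalA1-roughSIO}.
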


\begin{remark} 
 We have not encountered either \eqref{ptwiseglobalA1} or \eqref{ptwiseglobalA1-roughSIO} in the literature with $w$ belonging to $A_1$ (or any $A_p$), which may lead to new $L^p$--$L^q$ estimates.
\end{remark}


\begin{theorem} \label{fractional} Let $\alpha\in (0,1)$. Suppose that $\Omega$ is homogeneous of degree zero with zero average on $\mathbb S^{n-1}$ and belongs to $L^{\frac{n}{\alpha},\infty}(\mathbb S^{n-1})$. Then,
\begin{equation}\label{ptwisebdd} T^\star_{\Omega} f(x)\leq c_n(1-\alpha)\|\Omega\|_{L^{\frac{n}{\alpha},\infty}(\mathbb S^{n-1})}\,I_{\alpha}(\mathcal D^{\alpha}f)(x), \qquad f\in C^\infty_c(\R^n).
\end{equation}
\end{theorem}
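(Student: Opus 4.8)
The plan is to combine two ingredients: first, the pointwise bound for $T^\star_\Omega$ in terms of a potential of the sharp maximal-type function associated to the kernel, which should follow the scheme of \cite{HMP1} (and whose refinement for rough kernels in $L^{n/\alpha,\infty}$ is the analog of Theorem \ref{rough} with $w\equiv 1$); and second, a pointwise comparison showing that the relevant local oscillation of $f$ at scale $t$ is controlled by $t^\alpha$ times an average of $\mathcal D^\alpha f$. Concretely, I would start from the standard decomposition of the truncated integral: for fixed $t>0$ write $\int_{|y|>t}\tfrac{\Omega(y)}{|y|^n}f(x-y)\,dy$ and use the zero-average hypothesis on $\Omega$ to subtract $f(x)$, so that the truncation at level $t$ is bounded by a sum over dyadic annuli $2^j t \le |y| < 2^{j+1}t$ of $\|\Omega\|_{L^{n/\alpha,\infty}}$ times $(2^j t)^{-n}\|\mathbf 1_{\{2^jt\le|y|<2^{j+1}t\}}\Omega(y/|y|)\|$ paired against $\int_{|x-y|\le 2^{j+1}t}|f(x-y)-f(x)|\,dy$. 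Here the weak-$L^{n/\alpha}$ norm of $\Omega$ on the sphere, together with Hölder's inequality in Lorentz spaces on each annulus, is what forces the exponent $n/\alpha$ rather than $n$: we gain a factor $(2^jt)^{\alpha}$ at the price of needing $\Omega\in L^{n/\alpha,\infty}$.

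The heart of the matter is then the local estimate
\[
\frac{1}{r^{n}}\int_{|x-y|\le r}|f(x)-f(y)|\,dy \;\le\; C_{n}\,(1-\alpha)\,\frac{1}{r^{n-\alpha}}\int_{|x-y|\le 2r}\mathcal D^\alpha f(y)\,dy,
\]
or a suitable variant of it, which is what allows one to trade the raw oscillation for $r^\alpha \cdot (\text{average of }\mathcal D^\alpha f)$. To prove this I would bound $|f(x)-f(y)|\le |f(x)-f(z)|+|f(z)-f(y)|$ for an auxiliary point $z$, average in $z$ over a ball, and recognize each term after dividing by $|x-z|^{n+\alpha}$ (resp.\ $|y-z|^{n+\alpha}$) and reinserting the correct power of $r$ as being dominated by $\mathcal D^\alpha f(x)$ and $\mathcal D^\alpha f(y)$ integrated locally; the factor $(1-\alpha)$ appears exactly because $\int_{|w|\le s}|w|^{-n-\alpha}\,dw$ is infinite but the \emph{difference} quotient tames it, and the normalizing constant in the definition of $|\nabla|^\alpha$ scales like $1-\alpha$ as $\alpha\to 1$. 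Summing the dyadic series $\sum_j (2^j t)^{\alpha}\,(2^jt)^{-n}\int_{|x-y|\le 2^{j+1}t}\mathcal D^\alpha f(y)\,dy$ converges because $\mathcal D^\alpha f\in L^1_{\loc}$ with enough decay (from $f\in C_c^\infty$), and—after interchanging the sum with the integral—reconstitutes the kernel $|x-y|^{\alpha-n}$, yielding $I_\alpha(\mathcal D^\alpha f)(x)$ up to the constant $c_n(1-\alpha)\|\Omega\|_{L^{n/\alpha,\infty}}$. Taking the supremum over $t>0$ on the left and noting the right-hand side is independent of $t$ finishes the argument.

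The main obstacle I anticipate is making the Lorentz-space pairing on the spherical variable rigorous and dimension-tracking: one must integrate $\Omega$ against the characteristic function of the spherical shell $\{y/|y|:2^jt\le|y|<2^{j+1}t\}$ and obtain a bound by $\|\Omega\|_{L^{n/\alpha,\infty}(\mathbb S^{n-1})}$ times a power of the annulus radius, which requires the dual exponent computation $(n/\alpha)' $ and the fact that the oscillation factor $\int_{|x-y|\le 2^{j+1}t}\mathcal D^\alpha f(y)\,dy$ can be distributed across the annulus in polar coordinates without losing the gain. A secondary technical point is justifying the absolute convergence of $\mathcal D^\alpha f$ and the Fubini step for $f\in C_c^\infty$ uniformly in $t$, which for $0<\alpha<1$ is exactly the regime where $\mathcal D^\alpha f$ is finite and controlled by $\|f\|_{C^1}$ near the diagonal and by $\|f\|_{L^1}$ far away. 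Once these are in place the assembly is routine.
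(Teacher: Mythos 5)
There is a genuine gap, and it sits exactly at what you call the heart of the matter. The single--scale estimate you propose,
\[
\frac{1}{r^{n}}\int_{|x-y|\le r}|f(x)-f(y)|\,dy \;\le\; C_{n}\,(1-\alpha)\,\frac{1}{r^{n-\alpha}}\int_{|x-y|\le 2r}\mathcal D^\alpha f(y)\,dy,
\]
is false: take $f$ a bump with $f(x)=1$ supported in $B(x,\rho)$ and let $r\gg\rho$. The left-hand side stays comparable to a dimensional constant, while the right-hand side is at most $(1-\alpha)\,r^{\alpha-n}\,\|\mathcal D^\alpha f\|_{L^1(\R^n)}\to 0$ as $r\to\infty$, since $\|\mathcal D^\alpha f\|_{L^1}$ is finite for $f\in C_c^\infty$ and $0<\alpha<1$. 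The same defect shows up globally in your decomposition: subtracting the \emph{fixed} value $f(x)$ in every dyadic annulus makes the terms at large scales comparable to $|f(x)|$ (once the annuli are far beyond $\mathrm{supp}\,f$, the oscillation $|f(x-y)-f(x)|$ is just $|f(x)|$), so the series over $j$ diverges. The only version of your local estimate that is true without the $(1-\alpha)$ factor, namely $\frac{1}{r^n}\int_{B(x,r)}|f(x)-f(y)|\,dy\le r^\alpha\,\mathcal D^\alpha f(x)$, also fails you, because $\sum_j (2^jt)^\alpha\,\mathcal D^\alpha f(x)$ diverges. Moreover, your heuristic for the constant is not right: the triangle-inequality-and-average-in-$z$ argument does not produce the Bourgain--Brezis--Mironescu factor $(1-\alpha)$; that factor is the content of a genuine theorem, not of the normalization of $|\nabla|^\alpha$.

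The paper's route repairs both defects simultaneously and you should compare it with your plan. Starting from the bound of \cite{HMP1}, $T^\star_\Omega f(x)\le C\sum_{k\in\Z}2^{-kn}\int_{|y|\le 2^k}|\Omega(y)|\,|f(x-y)-c_k|\,dy$, the constants $c_k$ are allowed to depend on the scale, and they are chosen to be the averages $c_k=f_{B_k}$ with $B_k=B(x,2^k)$; this is what makes the terms decay as $k\to\pm\infty$. After the Lorentz--H\"older pairing (which you do have, with the correct dual exponent $(\frac n\alpha)'$), the oscillation $\|f-f_{B_k}\|_{L^{(n/\alpha)',1}(B_k)}$ is controlled by $c_n(1-\alpha)\,r(B_k)^\alpha\avgint_{B_k}\int_{B_k}\frac{|f(z)-f(y)|}{|z-y|^{n+\alpha}}\,dy\,dz$ via the fractional Poincar\'e--Sobolev inequality in Lorentz form \eqref{PS-BBM-Lorentz} from \cite{MPW}; this mean-oscillation inequality (both sides measure oscillation \emph{inside} $B_k$) is precisely what is true where your pointwise-centered version is not, and it is the genuine source of the $(1-\alpha)$ factor. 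The final resummation of $\sum_k r(B_k)^\alpha\avgint_{B_k}\mathcal D^\alpha f$ into $I_\alpha(\mathcal D^\alpha f)(x)$ is fine in your sketch (Fubini in the convergent geometric series $\sum_{2^k\ge|x-y|}2^{-k(n-\alpha)}$ reconstitutes $|x-y|^{\alpha-n}$, and is if anything a bit more direct than the absorption argument the paper uses), but it only works after the first two steps have been set up with scale-dependent averages rather than $f(x)$.
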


To see that Theorem \ref{fractional} improves the main result in \cite{HMP1}, we have the following lemma.

\begin{lemma}\label{I1-domination}
Let $0<\alpha<1$ and $f\in C^\infty_c(\R^n)$. Then,
\begin{align*}
    (1-\alpha)\,I_{\alpha}(\mathcal D^{\al}f)(x) \leq c_{\alpha,n} \, I_1(|\nabla f|)(x)
\end{align*}
where $c_{\alpha,n}$ is bounded by a dimensional constant as $\alpha\ra 1^-$.
\end{lemma}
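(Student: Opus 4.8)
The plan is to estimate $I_\alpha(\mathcal D^\alpha f)(x)$ directly by unfolding both integral operators and using the fundamental theorem of calculus on the gradient. Writing out the definitions,
$$
I_\alpha(\mathcal D^\alpha f)(x) = \int_{\R^n}\frac{1}{|x-z|^{n-\alpha}}\left(\int_{\R^n}\frac{|f(z)-f(y)|}{|z-y|^{n+\alpha}}\,dy\right)dz.
$$
For the inner difference I would use $|f(z)-f(y)|\le \int_0^1 |\nabla f(y+t(z-y))|\,|z-y|\,dt$, so that the $|z-y|^{n+\alpha}$ in the denominator becomes $|z-y|^{n+\alpha-1}$. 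The natural next move is to interchange the order of integration (all integrands are non-negative, so Tonelli applies) and bring the $dz$-integral inside. The hard part will be organizing the resulting triple integral so that one of the two potential-type integrations can be carried out explicitly and produces the right power of $(1-\alpha)$: I expect the $z$-integral, which after the substitution has the form $\int \frac{dz}{|x-z|^{n-\alpha}|z-y|^{n+\alpha-1}}$ (schematically, before the $t$-substitution is absorbed), to be a classical convolution of Riesz kernels that evaluates to a constant times $|x-y|^{-(n-1)}$, up to a Beta-function factor $B(\alpha, \text{something})\sim \tfrac{1}{1-\alpha}$ — wait, I should be careful about which exponent carries the blow-up.

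A cleaner route, which I would actually pursue, is to do the $t$-substitution first to reduce $\mathcal D^\alpha f$ itself. One has the pointwise bound
$$
\mathcal D^\alpha f(z) \le \int_{\R^n}\frac{|z-y|\,\|\nabla f\|_{L^\infty}\wedge \text{(tail)}}{|z-y|^{n+\alpha}}\,dy,
$$
but this crude split does not keep track of the geometry. Instead I would substitute $h = y-z$ and then change variables $u = h/|h|$, $r=|h|$; after inserting the integral representation of the difference and Fubini, $\mathcal D^\alpha f(z)$ is dominated by $\int_{\R^n} \frac{|\nabla f(z+v)|}{|v|^{n+\alpha-1}}\,dv$ times a factor coming from integrating the $t$-parameter against $r^{-\alpha}$, and this $t$-integral is exactly $\int_0^1 t^{-\alpha}\,dt\cdot(\cdots)$, no — the blow-up as $\alpha\to1^-$ comes from $\int_0^1 (1-t)^{-\alpha}$-type or the radial integral $\int_0^\infty$ truncated, whose divergence is controlled by the $C^\infty_c$ tail; the factor that genuinely behaves like $\tfrac{1}{1-\alpha}$ is $\int_0^1 t^{\alpha-1}\,dt = \tfrac1\alpha$ or the complementary one. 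In any case the upshot I want is
$$
(1-\alpha)\,\mathcal D^\alpha f(z) \le c_n\,\Bigl(\,|\nabla f|\ast |\cdot|^{-(n+\alpha-1)}\mathbf 1_{|\cdot|\le R}\Bigr)(z) + c_n\,(1-\alpha)\,\|f\|_\infty\,R^{-\alpha},
$$
for a radius $R$ chosen comparably to the diameter of $\operatorname{supp} f$ relative to $z$; composing with $I_\alpha$ and using the semigroup property $I_\alpha(I_\beta g)=I_{\alpha+\beta}g$ (here with $\beta=1-\alpha$, giving $I_1$) converts the convolution part into $c_{\alpha,n} I_1(|\nabla f|)(x)$, where $c_{\alpha,n}$ stays bounded as $\alpha\to1^-$ because the constant in $I_\alpha\circ I_{1-\alpha}=I_1$ involves $\Gamma$-factors that are regular there.

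The main obstacle, then, is the bookkeeping of constants: one must show that every $\alpha$-dependent Gamma/Beta factor produced along the way either cancels against the explicit $(1-\alpha)$ prefactor or remains bounded near $\alpha=1$. Concretely the dangerous term is the normalizing constant $c_{n,\alpha}$ appearing in $|\nabla|^\alpha = c_{n,\alpha}\,\mathsf{p.v.}\!\int\cdots$, which satisfies $c_{n,\alpha}\sim \alpha(1-\alpha)\to 0$ — but here we are not using $|\nabla|^\alpha$, we are using the \emph{un-normalized} $\mathcal D^\alpha$, so the $(1-\alpha)$ must instead be extracted by hand from a divergent radial integral, which is precisely why the hypothesis $0<\alpha<1$ and the compact support of $f$ are both needed. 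Once that single scalar estimate is in place, the composition with $I_\alpha$ and the identification with $I_1$ is routine.
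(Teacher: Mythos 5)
Your skeleton is essentially the paper's: fundamental theorem of calculus on $|f(z)-f(y)|$, the change of variables that turns the $t$-parameter into $\int_0^1 t^{\alpha-1}\,dt=\tfrac1\alpha$, and then a composition of the kernels $|x-y|^{-(n-\alpha)}$ and $|y-u|^{-(n+\alpha-1)}$. But your constant bookkeeping fails at exactly the point the lemma is about. With the \emph{un-normalized} potentials used here, the beta-integral identity gives
\begin{equation*}
\int_{\R^n}\frac{dy}{|x-y|^{n-\alpha}\,|y-u|^{n+\alpha-1}}=C(n,\alpha)\,|x-u|^{-(n-1)},\qquad C(n,\alpha)=\pi^{n/2}\,\frac{\Gamma(\tfrac{1-\alpha}{2})\,\Gamma(\tfrac{\alpha}{2})\,\Gamma(\tfrac{n-1}{2})}{\Gamma(\tfrac{n+\alpha-1}{2})\,\Gamma(\tfrac{n-\alpha}{2})\,\Gamma(\tfrac12)},
\end{equation*}
and the factor $\Gamma(\tfrac{1-\alpha}{2})\sim\tfrac{2}{1-\alpha}$ blows up as $\alpha\to1^-$. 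This ``semigroup'' constant for $I_\alpha\circ I_{1-\alpha}$ is therefore \emph{not} regular at $\alpha=1$, contrary to your assertion; the $(1-\alpha)$ prefactor in the statement exists precisely to cancel it, via $(1-\alpha)\Gamma(\tfrac{1-\alpha}{2})=2\Gamma(\tfrac{3-\alpha}{2})$. Your plan instead locates the $\tfrac{1}{1-\alpha}$ growth in ``a divergent radial integral'' controlled by the compact support of $f$, but the only parameter integral your reduction produces is $\int_0^1 t^{\alpha-1}\,dt=\tfrac1\alpha$, harmless near $\alpha=1$, and compact support plays no quantitative role. As written, the argument either claims the uniform bound $I_\alpha(\mathcal D^\alpha f)\le C_n I_1(|\nabla f|)$ (false: by the Bourgain--Brezis--Mironescu phenomenon $\mathcal D^\alpha f$ itself grows like $\tfrac{1}{1-\alpha}$), or, once $C(n,\alpha)$ is computed correctly, your earlier extraction of the $(1-\alpha)$ has no basis — so the proof does not close.

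A secondary but genuine defect is the truncated-plus-tail decomposition: the tail term $c_n(1-\alpha)\|f\|_\infty R^{-\alpha}$ cannot simply be fed into $I_\alpha$, since $I_\alpha$ of a constant is infinite, and even with $R=R(z)\simeq 1+|z|$ the resulting $(1+|z|)^{-\alpha}$ is not integrable against $|x-z|^{-(n-\alpha)}$ (logarithmic divergence). The truncation is unnecessary anyway: the FTC substitution yields the global bound $\mathcal D^\alpha f(z)\le\tfrac1\alpha\int_{\R^n}|\nabla f(u)|\,|u-z|^{-(n+\alpha-1)}\,du$, after which a single application of the displayed identity gives the lemma with the explicit constant $c_{\alpha,n}=\tfrac{(1-\alpha)\,\pi^{(n-1)/2}\,\Gamma(\frac{1-\alpha}{2})\,\Gamma(\frac{\alpha}{2})\,\Gamma(\frac{n-1}{2})}{\alpha\,\Gamma(\frac{n+\alpha-1}{2})\,\Gamma(\frac{n-\alpha}{2})}$, which is bounded as $\alpha\to1^-$; this is exactly the paper's argument.
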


\begin{remark}
  The explicit constant is given by
  $$c_{\alpha,n}=\frac{(1-\alpha)\,\pi^{\frac{n-1}{2}}\,\Gamma(\frac{1-\alpha}{2})\,\Gamma(\frac{\alpha}{2})\,\Gamma(\frac{n-1}{2})}{\alpha\,\Gamma(\frac{n+\alpha-1}{2})\,\Gamma(\frac{n-\alpha}{2})}.$$
  Since
  $$(1-\alpha)\,\Gamma(\frac{1-\alpha}{2})=2\,\Gamma(\frac{3-\alpha}{2})$$
  we have the limit  
 $$\lim_{\alpha\ra 1^-}c_{\alpha,n}=\frac{2\pi^{n/2}}{\Gamma(\frac{n}{2})}=\sigma(\mathbb S^{n-1})
 $$
 where $\sigma$ is the surface measure of the unit sphere.
%
%
%
\end{remark}

To prove Theorem \ref{fractional}, we shall need the following Poincar\'e-Sobolev type estimates related to the localized version of the nonlinear fractional differential operator $\mathcal D^\al$; namely, for any $0<\al<1$ and any cube (or ball) $Q$,
\begin{equation}\label{PS-BBM}
\left(\avgint_Q |f(x)-f_Q|^{\left(\frac{n}{\al}\right)'}\, dx\right )^\frac{1}{\left(\frac{n}{\al}\right)'} \le c_{n}\, (1-\al) \,\ell(Q)^{\al} \avgint_{Q} \int_Q \frac{\lvert f(x)-f(y)\rvert}{\lvert x-y\rvert^{n+\al}}\,dy\,dx
\end{equation}
which can be found in \cite{BBM2}. A different approach and improvements can be found in \cite{HMPV} and \cite{MPW} where everything is derived from the following $(1,1)$ case:
\begin{align}\label{PS-BBMspecial}
    \avgint_Q |f(x)- f_{Q}|\,dx\leq  c_n\,(1-\al) \,\ell(Q)^{\al} \avgint_Q \int_Q\frac{|f(x)-f(y)|}{|x-y|^{n+\al  }}\,dy\,dx.
\end{align}
It is interesting to note that whenever $\al\in(0,1)$, 
$$
(1-\al)\,\ell(Q)^\al \avgint_Q \int_Q\frac{|f(x)-f(y)|}{|x-y|^{n+\al  }}\,dy\,dx\leq c_n\ell(Q)\,\avgint_Q |\nabla f(x)| \,dx.
$$
Further, they showed in \cite{BBM1} that the left-hand side converges to the right-hand side as $\al \rightarrow 1^-$.

For any $0<p,q<\infty$ and a measure space $(X,\mu)$, the Lorentz space $L^{p,q}(\mu)$ is the collection of functions that satisfy
$$\|f\|_{L^{p,q}(\mu)}=\left(p\int_0^\infty t^q\mu(\{x\in X:|f(x)|>t\})^{\frac{q}{p}}\frac{dt}{t}\right)^{\frac1q}<\infty.$$
When $q=\infty$, $L^{p,\infty}(\mu)$ is the weak $L^p$ space defined by
$$\|f\|_{L^{p,\infty}(\mu)}=\sup_{t>0}t\mu(\{x\in X:|f(x)|>t\})^{\frac1p}<\infty.$$
The normalized Lorentz average associated to a cube $Q$ (or a ball) is defined to be 
$$\|f\|_{L^{p,q}(Q)}:=\|f\|_{L^{p,q}(Q,\frac{dx}{|Q|})}=\frac{1}{|Q|^{\frac1p}}\|f\mathbf 1_Q\|_{L^{p,q}(\R^n)}.$$
There is a better version of \eqref{PS-BBM}, stated for the Lorentz average:
\begin{equation}\label{PS-BBM-Lorentz}
\|f-f_Q\|_{L^{\left(\frac{n}{\al}\right)',1}(Q)} \le c_{n}\, (1-\al) \,\ell(Q)^{\al} \avgint_{Q} \int_Q \frac{\lvert f(x)-f(y)\rvert}{\lvert x-y\rvert^{n+\al}}\,dy\,dx,
\end{equation}
which was proved in \cite{MPW}.

\hfill

Motivated by Theorem \ref{fractional}, we have the following fractional versions of Theorems \ref{identity} and \ref{rough}.

\begin{theorem} \label{identityfractional}  Let $w\in A_1$ and $0<\al<1$. 
There exists a dimensional constant $c_n$ such that for any Lipschitz function $f$
that has compact support, 
we have
\begin{equation} \label{globalfrac}
|f(x)|\leq 
 c_n (1-\al)\, [w]_{A_1}\, T_{w,\al}(\mathcal{D}^\al f)(x).
\end{equation}
\end{theorem}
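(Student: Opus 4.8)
The plan is to run the dyadic telescoping scheme behind Theorem \ref{identity}, with the classical Poincar\'e inequality replaced by the fractional Poincar\'e--Sobolev estimate \eqref{PS-BBMspecial} (which is what supplies the factor $1-\al$ and the power $\ell(Q)^{\al}$), and with the weight inserted ``one annulus at a time.'' Fix $x$; we may assume $T_{w,\al}(\mathcal D^\al f)(x)<\infty$, since otherwise there is nothing to prove. Set $B_k=B(x,2^k)$, $r_k=2^k$ and $A_k=B_k\setminus B_{k-1}$ for $k\in\Z$, so the annuli $A_k$ are pairwise disjoint with $\bigcup_k A_k=\R^n\setminus\{x\}$. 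Since $f$ is Lipschitz with compact support, $f_{B_k}\to f(x)$ as $k\to-\infty$ and $f_{B_k}\to 0$ as $k\to+\infty$, so telescoping $f(x)=\sum_k(f_{B_{k-1}}-f_{B_k})$ and comparing $B_{k-1}$ with $B_k$ gives $|f(x)|\le 2^n\sum_k\avgint_{B_k}|f-f_{B_k}|$. Applying \eqref{PS-BBMspecial} on each ball $B_k$ and bounding the inner integral $\int_{B_k}|f(y)-f(z)|\,|y-z|^{-n-\al}\,dz\le\mathcal D^\al f(y)$, one arrives at
\[
|f(x)|\le c_n(1-\al)\sum_{k\in\Z}r_k^{\al}\,\avgint_{B_k}\mathcal D^\al f .
\]
(Pointwise finiteness of $\mathcal D^\al f$, and the absolute convergence legitimizing the telescoping, both follow from $f$ being Lipschitz with compact support and $\al<1$.)

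The crux is to dominate $\sum_k r_k^{\al}\,\avgint_{B_k}g$ by a constant times $T_{w,\al}g(x)$, where $g:=\mathcal D^\al f\ge 0$; this is where the ``holes'' of the remark come in. I would decompose $\avgint_{B_k}g=\tfrac1{|B_k|}\sum_{\ell\le k}\int_{A_\ell}g$ and, on the piece supported in $A_\ell$, write $|B_k|=2^{(k-\ell)n}|B_\ell|$ and apply the $A_1$ condition \emph{at the scale $B_\ell$}, not at the large ball $B_k$: for a.e.\ $y\in B_\ell$, $\tfrac1{|B_\ell|}\le[w]_{A_1}\tfrac{w(y)}{w(B_\ell)}$. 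Summing in $k$ and interchanging the order of summation turns this into $[w]_{A_1}\sum_\ell\tfrac1{w(B_\ell)}\big(\int_{A_\ell}g\,w\big)\sum_{k\ge\ell}2^{-(k-\ell)n}r_k^{\al}$, and since $r_k=2^{k-\ell}r_\ell$ the inner geometric series equals $r_\ell^{\al}\sum_{m\ge 0}2^{m(\al-n)}<2\,r_\ell^{\al}$ --- here one uses precisely that $n\ge 2$ and $\al<1$ force $\al-n<-1$. Finally, for $y\in A_\ell$ one has $|x-y|\approx r_\ell$ and $B(x,|x-y|)\subseteq B_\ell$, so $r_\ell^{\al}/w(B_\ell)\lesssim|x-y|^{\al}/w(B(x,|x-y|))$, and reassembling the annuli gives $\sum_\ell\tfrac{r_\ell^{\al}}{w(B_\ell)}\int_{A_\ell}g\,w\lesssim T_{w,\al}g(x)$. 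Chaining these bounds produces \eqref{globalfrac}.

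I expect the weight-insertion step to be the only genuine obstacle, and it is instructive to see why a naive argument fails: using $A_1$ at the large ball $B_k$ forces one to control $\sum_{\ell\le k}r_\ell^{\al}/w(B_\ell)$ by $r_k^{\al}/w(B_k)$, which can break down for sufficiently singular $A_1$ weights since the reverse-doubling exponent of $w$ may be smaller than $\al$. Splitting $B_k$ into the disjoint annuli $A_\ell$ and invoking $A_1$ at each annulus's own scale replaces the troublesome ratio $w(B_k)/w(B_\ell)$ by the harmless volume ratio $2^{-(k-\ell)n}$; this is what simultaneously makes the geometric series converge (forcing the restriction $n\ge 2$) and lets the metric length $|x-y|$ together with $w(B(x,|x-y|))$ surface, i.e.\ it produces exactly $T_{w,\al}$ rather than a larger operator. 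The same scheme with \eqref{PS-BBMspecial} replaced by the classical $(1,1)$ Poincar\'e inequality reproves Theorem \ref{identity}, the factor $1-\al$ being simply carried along from the fractional Poincar\'e constant.
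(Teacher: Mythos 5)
Your proof is correct and follows essentially the same route as the paper: a dyadic chain of balls centered at $x$, the fractional $(1,1)$ Poincar\'e inequality \eqref{PS-BBMspecial}, and a reduction to disjoint annuli so that $|x-y|^{\al}$ and $w(B(x,|x-y|))$ can be inserted via the $A_1$ condition at each annulus's own scale. The only (harmless) differences are that you work with the global chain $B(x,2^k)$, $k\in\Z$, rather than a fixed ball $B(x,R)$ followed by $R\to\infty$, and that you achieve the disjointification by decomposing each ball into its annuli and interchanging the nonnegative double sum (Tonelli) instead of the paper's absorption (``holes'') argument, which uses the same fact $2^{\al-n}<1$.
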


\begin{remark}
We have yet to encounter this estimate with $w=1$ in the literature or in recent books like \cite{EE} or \cite{L}. 
 There are not any estimates like these even in the simpler case when the right-hand side does not have the so-called Bourgain-Brezis-Mironescu factor $1 - \alpha$.
\end{remark}

 Similarly, we have the following result replacing the identity operator in the left-hand side of \eqref{globalfrac} 
by $T^\star_\Omega(f)$ which yields another improvement for some of the main results in \cite{HMP1}.

\begin{theorem} \label{roughfractional} Suppose that $\Omega$ is homogeneous of degree zero with zero average on $\mathbb S^{n-1}$ and belongs to $L^{\frac{n}{\al},\infty}(\mathbb S^{n-1}).$   Let $w\in A_1$ and $0<\al<1$.  Then  
there exists a dimensional constant $c_n$ such that \ed for any for any smooth function $f$ with compact support, 
\begin{equation} T^\star_{\Omega} f(x)\leq 
 c_n(1-\al)\, \|\Omega\|_{\strt{2ex}L^{\frac{n}{\al},\infty}(\mathbb S^{n-1})} [w]_{A_1} \, T_{w,\al}(\mathcal{D}^\al f)(x).
\end{equation}
\end{theorem}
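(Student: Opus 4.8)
The plan is to deduce Theorem~\ref{roughfractional} from Theorem~\ref{fractional} together with a pointwise comparison between the Riesz potential $I_\al$ and the weighted potential $T_{w,\al}$ that uses nothing about $w$ beyond the $A_1$ condition. Since $\Omega$ satisfies exactly the hypotheses of Theorem~\ref{fractional}, that result already gives, for $f\in C^\infty_c(\R^n)$,
\[
T^\star_\Omega f(x)\le c_n\,(1-\al)\,\|\Omega\|_{L^{n/\al,\infty}(\mathbb S^{n-1})}\,I_\al(\mathcal D^\al f)(x),
\]
so it suffices to prove that for every non-negative measurable $g$ on $\R^n$ and almost every $x\in\R^n$,
\begin{equation}\label{kernelcompare-plan}
I_\al(g)(x)\le C_n\,[w]_{A_1}\,T_{w,\al}(g)(x),
\end{equation}
and then to apply \eqref{kernelcompare-plan} with $g=\mathcal D^\al f$, which is finite and locally bounded because $f\in C^\infty_c(\R^n)$ and $0<\al<1$, so that both sides of the resulting inequality make sense.

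For \eqref{kernelcompare-plan} I would argue at the level of kernels. Fix $y\neq x$ and put $r=|x-y|$. Using the standard equivalence $w\in A_1\iff Mw\le[w]_{A_1}w$ a.e.\ (with $M$ the Hardy--Littlewood maximal operator) and the trivial fact that $B(x,2r)\ni y$, one obtains for a.e.\ $y$
\[
\frac{w(B(x,r))}{r^n}\le\frac{w(B(x,2r))}{r^n}=2^n\omega_n\,\avgint_{B(x,2r)}w\le 2^n\omega_n\,Mw(y)\le 2^n\omega_n\,[w]_{A_1}\,w(y),
\]
where $\omega_n=|B(0,1)|$; equivalently
\[
\frac{1}{|x-y|^{n-\al}}=\frac{|x-y|^\al}{|x-y|^n}\le 2^n\omega_n\,[w]_{A_1}\,\frac{|x-y|^\al\,w(y)}{w(B(x,|x-y|))}.
\]
Multiplying by $g(y)\ge 0$ and integrating in $y$ yields \eqref{kernelcompare-plan} with $C_n=2^n\omega_n$, and feeding this into the bound from Theorem~\ref{fractional} produces the inequality of Theorem~\ref{roughfractional} with the claimed \emph{linear} dependence on $[w]_{A_1}$ (and $c_n$ replaced by $2^n\omega_n c_n$).

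The step I would flag as the one requiring attention is the assertion ``$\avgint_{B(x,2r)}w\le[w]_{A_1}w(y)$ for almost every $y$'': the $A_1$ inequality, as stated, only controls a.e.\ point of a \emph{fixed} ball, whereas here the radius $2r=2|x-y|$ moves with $y$, so one genuinely has to route through the maximal operator (or reduce first to balls with rational centre and radius) to get an a.e.-in-$y$ statement. Aside from this, the argument is a one-line substitution; there is no analytic obstacle once Theorem~\ref{fractional} is in hand.

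If one prefers a self-contained proof paralleling Theorem~\ref{identityfractional} rather than quoting Theorem~\ref{fractional}, the route is: for fixed $t>0$, split $\int_{|y|>t}\frac{\Omega(y)}{|y|^n}f(x-y)\,dy$ over the dyadic annuli $\{2^kt\le|y|<2^{k+1}t\}$, $B_k:=B(x,2^{k+1}t)$, with the ``holes'' inserted as in the proof of Theorem~\ref{identity}; on each annulus use the vanishing of $\int\frac{\Omega(y)}{|y|^n}\,dy$ there (a consequence of the degree-zero homogeneity and the zero average of $\Omega$) to replace $f(x-y)$ by $f(x-y)-f_{B_k}$; bound the resulting spherical average by $\|\Omega\|_{L^{n/\al,\infty}(\mathbb S^{n-1})}\,\|f-f_{B_k}\|_{L^{(n/\al)',1}(B_k)}$ via H\"older in Lorentz spaces; invoke the improved fractional Poincar\'e--Sobolev inequality \eqref{PS-BBM-Lorentz} to dominate this by $c_n(1-\al)(2^kt)^\al\avgint_{B_k}\mathcal D^\al f(u)\,du$; pass to the weight annulus by annulus using $|B_k|^{-1}\le[w]_{A_1}\,w(z)/w(B_k)$ for a.e.\ $z\in B_k$; and sum the dyadic series, which reconstructs $T_{w,\al}(\mathcal D^\al f)(x)$ uniformly in $t$. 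In this route the delicate point is precisely the reconstruction of the weighted kernel $|x-y|^\al/w(B(x,|x-y|))$ from the annular pieces, which is exactly what the ``holes'' are designed to make possible.
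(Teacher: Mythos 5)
Your main argument is correct, and it takes a genuinely different route from the paper. The paper does not invoke Theorem \ref{fractional} as a black box; instead it re-enters that proof at the intermediate stage where the sum has already been reduced to the disjoint annuli $B_k\setminus B_{k-1}$, and then inserts the weight annulus by annulus, applying the $A_1$ inequality on each fixed ball $B_k$ (so that $r(B_k)^\al/w(B_k)\simeq |x-y|^\al/w(B(x,|x-y|))$ on the $k$-th annulus) and summing the disjoint pieces to reconstruct $T_{w,\al}(\mathcal D^\al f)(x)$. You instead prove the general pointwise domination $I_\al g(x)\le 2^n\omega_n\,[w]_{A_1}\,T_{w,\al}g(x)$ for $g\ge 0$, via $\tfrac{w(B(x,|x-y|))}{|x-y|^n}\le 2^n\omega_n\,Mw(y)\le 2^n\omega_n\,[w]_{A_1}w(y)$ a.e., and then quote \eqref{ptwisebdd}; your caveat about the radius moving with $y$ is exactly right, and routing through the characterization $Mw\le [w]_{A_1}w$ a.e.\ (or rational balls) resolves it, with an exceptional null set in $y$ independent of $x$, which is harmless under the integral. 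Your route is more modular and shorter: the same kernel-comparison lemma upgrades \eqref{Tstarbound} to Theorem \ref{rough} and Theorem \ref{fractional} to Theorem \ref{roughfractional} with the same linear dependence on $[w]_{A_1}$, making clear that the ``holes'' are only needed inside the unweighted estimates themselves, not for the weighted upgrade. The paper's annular argument, by contrast, is self-contained (it never needs the maximal-function characterization of $A_1$) and keeps the local, scale-by-scale structure, which is what generalizes to the local statements such as Theorem \ref{identityfractional}. Your alternative self-contained sketch is essentially the paper's proof; its only loose end is the cancellation step, which in the paper (following \cite{HMP1}) is set up on the balls $\{|y|\le 2^k\}$ with constants $c_k=f_{B_k}$ rather than on individual annuli, where the truncation boundary $|y|>t$ would spoil exact cancellation on the lowest partial annulus --- but since that route is offered only as an option and your primary argument is complete, this does not affect the verdict.
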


\hfill

\section{Proofs of the theorems}

\begin{proof}[Proof of Theorem \ref{identity}]

Recall the $(1,1)$ Poincar\'e inequality, for a dimensional constant $c_n$
$$ 
\avgint_{B}\lvert f(x)-f_{B}\rvert\,dx \leq  c_n\, r(B)\,\avgint_B |\nabla f(x)| \,dx
$$
for every ball $B$.

Fix a point $x\in \R^n$ and let $R>0$ denote the radius of the ball $B$ centered at $x$.  Let  $\{B_k\}_{k\in\N}$ be the family of nested balls centered at $x$ such that $B_1=B$ and $r(B_k)=2^{1-k}R$. 
Then, 
\begin{equation*}
\lvert f(x)-f_{B}\rvert=\left \lvert\lim_{k\to\infty} f_{B_k} - f_{B_1}\right\rvert\le \sum_{k\in\N} \left\lvert f_{B_{k+1}}- f_{B_k}\right\rvert.
\end{equation*}
Using the dyadic structure of the chain and the Poincar\'e-type inequality we obtain
\begin{align*}
    \sum_{k\in\N} \left\lvert f_{B_{k+1}}- f_{B_k}\right\rvert & \le  \sum_{k\in\N} \frac{1}{|B_{k+1}|}\int_{B_{k+1}}\lvert f(y)-f_{B_k}\rvert\,dy\\
    & \le  2^n \sum_{k\in\N} \frac{1}{|B_{k}|}\int_{B_{k}} \lvert f(y)-f_{B_k}\rvert\,dy\\
    & \le  c_n \sum_{k\in\N} \frac{r(B_k)}{|B_{k}|}\int_{B_{k}}|\nabla f(y)|\,dy.
\end{align*}
Let us analyze the sum:
$$\sum_{k\in\N} \frac{r(B_k)}{|B_{k}|}\int_{B_{k}}|\nabla f(y)|\,dy = \sum_{k\in\N} \frac{r(B_k)}{|B_{k}|}\int_{B_{k}\setminus B_{k+1}}|\nabla f(y)|\,dy \,+ \sum_{k\in\N} \frac{r(B_k)}{|B_{k}|}\int_{B_{k+1}}|\nabla f(y)|\, dy,
$$
but then
\begin{align*}
    \sum_{k\in\N} \frac{r(B_k)}{|B_{k}|}\int_{B_{k+1}}|\nabla f(y)|\, dy & = \sum_{k\in\N} \frac{2\,r(B_{k+1})} {2^n|B_{k+1}|}\int_{B_{k+1}}|\nabla f(y)|\, dy \\
    & =  \frac1{2^{n-1}} \sum_{k\in\N} \frac{r(B_{k+1})}{|B_{k+1}|}\int_{B_{k+1}}|\nabla f(y)|\, dy \\
    & \le \frac1{2^{n-1}} \sum_{k\in\N} \frac{r(B_{k})}{|B_{k}|}\int_{B_{k}}|\nabla f(y)|\, dy .
\end{align*}
Therefore we have
\begin{equation}\label{Estimatewithwholes}
\sum_{k\in\N}\frac{r(B_k)}{|B_k|}\int_{B_k}|\nabla f(y)|\, dy \leq c_n \sum_{k\in\N} \frac{r(B_k)}{|B_k|}\int_{B_k\setminus B_{k+1}}|\nabla f(y)|\,dy.
\end{equation}
%
Now, by the definition of $A_1$, we continue with
\begin{align*}
    \lvert f(x)-f_{B}| & \leq c_n [w]_{A_1} \sum_{k\in\N} \frac{r(B_{k})}{w(B_{k})}\int_{B_{k}\setminus B_{k+1}}|\nabla f(y)|\,w(y)\,dy \\
    & \leq c_n [w]_{A_1} \sum_{k\in\N} \frac{1}{w(B_{k})}\int_{B_{k}\setminus B_{k+1}} |x-y|\, |\nabla f(y)|\,w(y)\,dy \\
    & \leq c_n [w]_{A_1} \sum_{k\in\N} \int_{B_{k}\setminus B_{k+1}} \frac{|x-y|}{w(B(x,|x-y|))}  |\nabla f(y)|\,w(y)\,dy \\
    & \leq c_n [w]_{A_1} \int_{B} \frac{|x-y|}{w(B(x,|x-y|))}  |\nabla f(y)|\,w(y)\,dy.
\end{align*}
%
To obtain \eqref{ptwiseglobalA1}, we let $R\to \infty$ and observe that $f_B\to0$ while the right-hand side is dominated by the integral over $\R^n$.
\end{proof}

\begin{proof}[Proof of Theorem \ref{rough}]

As proved in \cite{HMP1}, we have
$$T^\star_{\Omega} f(x) \leq c_n\,\|\Omega\|_{\strt{2ex}L^{n,\infty}(\mathbb S^{n-1})} \sum_{k\in\Z}r(B_k)\avgint_{B_k}|\nabla f(y)|\,dy$$
where $B_k=B(x,2^k)$ for each $k\in\Z$. Since
$$\sum_{k\in\Z} \frac{r(B_k)}{|B_{k}|}\int_{B_{k}}|\nabla f(y)|\,\dif y = \sum_{k\in\Z} \frac{r(B_k)}{|B_{k}|}\int_{B_{k}\setminus B_{k-1}}|\nabla f(y)|\,dy \,+ \sum_{k\in\Z} \frac{r(B_k)}{|B_{k}|}\int_{B_{k-1}}|\nabla f(y)|\, dy,
$$
and
\begin{align*}
    \sum_{k\in\Z} \frac{r(B_k)}{|B_{k}|}\int_{B_{k-1}}|\nabla f(y)|\dif y & = \sum_{k\in\Z} \frac{2\,r(B_{k-1})} {2^n|B_{k-1}|}\int_{B_{k-1}}|\nabla f(y)|\, dy \\
    & =  \frac1{2^{n-1}} \sum_{k\in\Z} \frac{r(B_{k-1})}{|B_{k-1}|}\int_{B_{k-1}}|\nabla f(y)|\, dy \\
    & = \frac1{2^{n-1}} \sum_{k\in\Z} \frac{r(B_{k})}{|B_{k}|}\int_{B_{k}}|\nabla f(y)|\, dy ,
\end{align*}
we have
$$\sum_{k\in\Z}\frac{r(B_k)}{|B_k|}\int_{B_k}|\nabla f(y)|\dif y \leq c_n \sum_{k\in\Z} \frac{r(B_k)}{|B_k|}\int_{B_k\setminus B_{k-1}}|\nabla f(y)|\,dy ,
$$
and hence
$$T^\star_{\Omega} f(x) \leq c_n\,\|\Omega\|_{\strt{2ex}L^{n,\infty}(\mathbb S^{n-1})} \sum_{k\in\Z}r(B_k)\avgint_{B_k\setminus B_{k-1}}|\nabla f(y)|\,dy.$$
By the definition of $A_1$, we have
\begin{align*}
    T^\star_{\Omega} f(x) & \leq c_n \|\Omega\|_{\strt{2ex}L^{n,\infty}(\mathbb S^{n-1})}[w]_{A_1} \sum_{k\in\Z} \frac{r(B_{k})}{w(B_{k})}\int_{B_{k}\setminus B_{k-1}}|\nabla f(y)|\,w(y)\,dy \\
    & \leq c_n \|\Omega\|_{\strt{2ex}L^{n,\infty}(\mathbb S^{n-1})} [w]_{A_1} \sum_{k\in\Z} \frac{1}{w(B_{k})}\int_{B_{k}\setminus B_{k-1}} |x-y|\, |\nabla f(y)|\,w(y)\,dy \\
    & \leq c_n \|\Omega\|_{\strt{2ex}L^{n,\infty}(\mathbb S^{n-1})} [w]_{A_1} \sum_{k\in\Z} \int_{B_{k}\setminus B_{k-1}} \frac{|x-y|}{w(B(x,|x-y|))}  |\nabla f(y)|\,w(y)\,dy \\
    & \leq c_n \|\Omega\|_{\strt{2ex}L^{n,\infty}(\mathbb S^{n-1})} [w]_{A_1}\,  \int_{\R^n} \frac{|x-y|}{w(B(x,|x-y|))}  |\nabla f(y)|\,w(y)\,dy .
\end{align*}

\end{proof}

\begin{proof}[Proof of Theorem \ref{fractional}]
As proved in \cite{HMP1}, we have
$$T^\star_{\Omega} f(x) \le C \sum_{k\in \Z} \frac{1}{2^{kn}}\int_{|y|\leq 2^{k}}|\Omega(y)|\,|f(x-y)-c_k|\,dy.$$
where $c_k$ is a constant to be chosen for each $k\in\Z$. By H\"older's inequality, we have
$$T^\star_{\Omega} f(x) \leq c_n\sum_{k\in \Z} \|\Omega\|_{\strt{2ex}L^{\frac{n}{\al},\infty}(B(0,2^k))} \, \|f(x-\cdot)-c_k\|_{\strt{2ex}L^{\left(\frac{n}{\al}\right)',1}(B(0,2^k))}.$$
Since
$$|\{y\in B(0,2^k):|\Omega(y)|>\lambda\}|=\frac{2^{kn}}{n}\sigma(\{y'\in \Sn^{n-1}:|\Omega(y')|>\lambda\}),$$
we have $$\|\Omega\|_{\strt{2ex}L^{\frac{n}{\al},\infty}(B(0,2^k))}=C_n\,\|\Omega\|_{\strt{2ex}L^{\frac{n}{\al},\infty}(\Sn^{n-1})},$$
and hence
$$T^\star_{\Omega} f(x) \leq c_n\sum_{k\in \Z} \|\Omega\|_{\strt{2ex}L^{\frac{n}{\al},\infty}(\Sn^{n-1})} \, \|f(x-\cdot)-c_k\|_{\strt{2ex}L^{\left(\frac{n}{\al}\right)',1}(B(0,2^k))}.$$
Let $B_k=B(x,2^{k})$ and $c_k=f_{B_k}=\avgint_{B_k} f$, then
$$\|f(x-\cdot)-c_k\|_{\strt{2ex}L^{\left(\frac{n}{\al}\right)',1}(B(0,2^k))}=\|f-f_{B_k}\|_{\strt{2ex}L^{\left(\frac{n}{\al}\right)',1}(B_k)}.$$
This fact combined with the previous inequality yields
$$T^\star_{\Omega} f(x)\leq c_n\,\|\Omega\|_{\strt{2ex}L^{\frac{n}{\al},\infty}(\mathbb{S}^{n-1})} 
\sum_{k\in \Z}\|f-f_{B_k}\|_{\strt{2ex}L^{\left(\frac{n}{\al}\right)',1}(B_k)}.$$
We continue by using the Poincar\'e-Sobolev inequality \eqref{PS-BBM-Lorentz}. 
 Recalling the definition of $\mathcal D^{\al}$
$$\mathcal D^{\al}f(x)=\int_{\R^n}\frac{|f(x)-f(y)|}{|x-y|^{n+\al}}\,dy,$$
we can continue with, 

\begin{align*}
    T^\star_{\Omega} f(x) & \leq c_n(1-\al)\,\|\Omega\|_{\strt{2ex}L^{\frac{n}{\al},\infty}(\mathbb{S}^{n-1})}\sum_{k\in \Z} r(B_k)^{\al} \avgint_{B_k} \int_{B_k} \frac{\lvert f(z)-f(y)\rvert}{\lvert z-y\rvert^{n+\al}}\,dy\,dz \\
    & \leq c_n(1-\al)\,\|\Omega\|_{\strt{2ex}L^{\frac{n}{\al},\infty}(\mathbb{S}^{n-1})}\sum_{k\in \Z}
r(B_k)^{\al} \avgint_{B_k} \mathcal D^{\al}f(y) \,dy,
\end{align*}
We now analyze the sum:
\begin{align*}
    \sum_{k\in \Z} r(B_k)^{\al} \avgint_{B_k} \mathcal D^{\al}f(y) \,dy & = \sum_{k\in \Z} \frac{2^{k\al}}{\omega_n2^{kn}} \int_{B_k} \mathcal D^{\al}f(y) \,dy \\
    & =\frac1{\omega_n}\sum_{k\in \Z}\frac{1}{2^{k(n-\al)}}\int_{2^{k-1}<|x-y|\leq2^k} \mathcal D^{\al}f(y) \,dy \\
    & \qquad\qquad\qquad +\frac{1}{\omega_n}\sum_{k\in \Z}\frac{1}{2^{k(n-\al)}}\int_{|x-y|\leq 2^{k-1}} \mathcal{D}^{\al}f(y) \,dy \\
    & \leq \frac1{\omega_n}\sum_{k\in \Z}\int_{2^{k-1}<|x-y|\leq2^k}\frac{1}{|x-y|^{n-\al}}\, \mathcal D^{\al}f(y) \,dy \\
    & \qquad\qquad\qquad + \frac1{2^{n-\al}}\,\sum_{k\in \Z}r(B_{k-1})^{\al} \avgint_{B_{k-1}} \mathcal D^{\al}f(y) \,dy \\
    & \leq  \frac1{\omega_n}I_{\al}(\mathcal D^{\al}f)(x)+\frac1{2^{n-\al}}\,\sum_{k\in \Z} r(B_k)^\al\avgint_{B_k}\mathcal D^{\al}(y) \,dy.
\end{align*}
Since $\al<1$ and $n\geq2$, we have $\frac1{2^{n-\al}}<1$, and we can rearrange the terms to obtain
%
%
\begin{align*}
\sum_{k\in \Z} r(B_k)^{\al} \avgint_{B_k} \mathcal D^{\al}f(y)\, dy& \leq c_n(2^{n-\al})'\,I_{\al}(\mathcal D^{\al}f)(x)\\
&  \leq c_n\,I_{\al}(\mathcal D^{\al}f)(x).
\end{align*}
This concludes the proof of the theorem.

\end{proof}

\begin{proof}[Proof of Lemma \ref{I1-domination}]
    Since
$$f(y)-f(x) = \int_0^1 \nabla f(x+t(y-x)) \cdot (y-x) \,dt$$
by the Fundamental Theorem of Calculus for line integrals, we have
\begin{align*}
    I_{\al}(\mathcal D^{\al}f)(x) & = \int_{\R^n} \int_{\R^n}\frac{|f(y)-f(z)|}{|y-z|^{n+\al}}\,dz \frac{dy}{|x-y|^{n-\al}} \\
    & \le \int_{\R^n} \int_{\R^n} \int_0^1 |\nabla f(y+t(z-y))| \,dt \frac{dz}{|y-z|^{n+\al-1}}\, \frac{dy}{|x-y|^{n-\al}} \\
    & = \int_{\R^n} \int_0^1 \int_{\R^n} \frac{|\nabla f(y+t(z-y))|}{|y-z|^{n+\al-1}} dz \,dt\, \frac{dy}{|x-y|^{n-\al}} \\
    & = \int_{\R^n} \left(\int_{\R^n} \frac{|\nabla f(u)|}{|u-y|^{n+\al-1}} du \int_0^1\frac{t^{n+\al-1}}{t^n}\,dt\right) \frac{dy}{|x-y|^{n-\al}} \\
    & = \frac{1}{\al} \int_{\R^n} \int_{\R^n} \frac{|\nabla f(u)|}{|u-y|^{n+\al-1}} du \, \frac{dy}{|x-y|^{n-\al}} \\
    & = \frac{1}{\al} \int_{\R^n} |\nabla f(u)| \int_{\R^n} \frac{1}{|u-y|^{n+\al-1}} \frac{1}{|x-y|^{n-\al}} \,dy\,du
\end{align*}
where we have made the change of variables $u = y+t(z-y)$ in the fourth estimate. Now, we use the beta integral identity (see exercise 2.4.9 in  \cite{G}), namely
$$\int_{\R^n} \frac{ 1}{|t-x_1|^{\al_1}}\, \frac{ 1}{|t-x_2|^{\al_2}}\,dt= \pi^{n/2}
\frac{\Gamma(\frac{n-\alpha_1}{2})}{\Gamma(\frac{\alpha_1}{2})}
\frac{\Gamma(\frac{n-\alpha_2}{2})}{\Gamma(\frac{\alpha_2}{2})}
\frac{\Gamma(\frac{\alpha_1+\alpha_2-n}{2})}{\Gamma(n-\frac{\alpha_1+\alpha_2}{2})}
|x_1-x_2|^{n-\al_1-\al_2}
$$
for $0<\alpha_1,\alpha_2<n$, $\alpha_1+\alpha_2>n$. 

Applying this with $\alpha_1=n+\al-1$ and $\alpha_2=n-\al$, we have
\begin{align*}
    I_{\al}(\mathcal D^{\al}f)(x) & \leq \frac{\pi^{n/2}\,\Gamma(\frac{1-\al}{2})\,\Gamma(\frac{\al}{2})\,\Gamma(\frac{n-1}{2})}{\al\,\Gamma(\frac{n+\al-1}{2})\,\Gamma(\frac{n-\al}{2})\,\Gamma(\frac{1}{2})} \int_{\R^n} \frac{ |\nabla f(u)|}{|x-u|^{n-1}}\,du \\
    & = \frac{\pi^{\frac{n-1}{2}}\,\Gamma(\frac{1-\al}{2})\,\Gamma(\frac{\al}{2})\,\Gamma(\frac{n-1}{2})}{\al\,\Gamma(\frac{n+\al-1}{2})\,\Gamma(\frac{n-\al}{2})} \, I_1(|\nabla f|)(x).
\end{align*}
\end{proof}

\begin{proof}[Proof of Theorem \ref{identityfractional}]
As in the proof of Theorem \ref{identity}, we have
$$\left\lvert f(x)- f_B\right\rvert \le  c_n \sum_{k\in\N} \frac{1}{|B_{k}|}\int_{B_{k}} \lvert f(y)-f_{B_k}\rvert\,dy,$$
 where $B$ is any ball in $\R^n$ centered at $x$. 
Applying \eqref{PS-BBMspecial} yields
\begin{align*}
    \left\lvert f(x)- f_B\right\rvert & \le c_n (1-\al)\sum_{k\in\N}r(B_k)^{\al} \avgint_{B_k} \int_{B_k}\frac{|f(z)-f(y)|}{|z-y|^{n+\al  }}\,dy\,dz \\
    & =  c_n (1-\al)\sum_{k\in\N}r(B_k)^{\al} \avgint_{B_k} \mathcal{D}^\al f(y)\,dy.
\end{align*}
Since
$$\sum_{k\in\N}r(B_k)^{\al} \avgint_{B_k} \mathcal{D}^\al f(y)\,dy \le \sum_{k\in\N}\frac{r(B_k)^{\al}}{|B_k|} \int_{B_k\setminus B_{k+1}} \mathcal{D}^\al f(y)\,dy \,+ \sum_{k\in\N}\frac{r(B_k)^{\al}}{|B_k|} \int_{B_{k+1}} \mathcal{D}^\al f(y)\,dy,$$
and
\begin{align*}
    \sum_{k\in\N}\frac{r(B_k)^{\al}}{|B_k|} \int_{B_{k+1}} \mathcal{D}^\al f(y)\,dy & = \sum_{k\in\N}\frac{2^\al r(B_{k+1})^{\al}}{2^n|B_{k+1}|} \int_{B_{k+1}} \mathcal{D}^\al f(y)\,dy \\
    & \le 2^{\al-n}\sum_{k\in\N}\frac{r(B_k)^{\al}}{|B_k|} \int_{B_k} \mathcal{D}^\al f(y)\,dy,
\end{align*}
we have
$$\left\lvert f(x)- f_B\right\rvert \le c_n (1-\al)(2^{n-\al})'\sum_{k\in\N}\frac{r(B_k)^{\al}}{|B_k|} \int_{B_k\setminus B_{k+1}} \mathcal{D}^\al f(y)\,dy.
$$
Since $w\in A_1$, we have
\begin{align*}
    \left\lvert f(x)- f_B\right\rvert & \le c_n (1-\al)(2^{n-\al})'\,[w]_{A_1}\sum_{k\in\N}\frac{r(B_k)^{\al}}{w(B_k)} \int_{B_k\setminus B_{k+1}} \mathcal{D}^\al f(y)\,w(y)\,dy \\
    & \le c_n (1-\al)2^\al(2^{n-\al})'\, [w]_{A_1}\sum_{k\in\N}\int_{B_k\setminus B_{k+1}} \frac{|x-y|^\al\,\mathcal{D}^\al f(y)}{w(B(x,|x-y|))}\,w(y)\,dy \\
    & = c_n (1-\al)2^\al(2^{n-\al})'\, [w]_{A_1}\int_{B} \frac{|x-y|^\al}{w(B(x,|x-y|))}\,\mathcal{D}^\al f(y)\,w(y)\,dy.
\end{align*}
As before, the bound $2^\al(2^{n-\al})'$ is bounded by a dimensional constant since $\al<1$ and $n\geq 2$. 
Finally, by letting $R\to\infty$, we obtain \eqref{globalfrac} concluding the proof of the theorem.  
\end{proof}

\hfill

\begin{proof}[Proof of Theorem \ref{roughfractional}]
As in the proof of Theorem \ref{fractional}, we have
\begin{align*}
    T^\star_{\Omega} f(x) & \leq  c_n(1-\al)\,\|\Omega\|_{\strt{2ex}L^{\frac{n}{\al},\infty}(\mathbb{S}^{n-1})}\sum_{k\in \Z}\frac{1}{2^{k(n-\al)}}\int_{B_k\setminus B_{k-1}} \mathcal D^{\al}f(y) \,dy
\end{align*}
where $B_k=B(x,2^k)$.
Since $w\in A_1$, we have
\begin{align*}
    T^\star_{\Omega} f(x) & \leq C_{n,\al,\Omega}\, \sum_{k\in\Z} \frac{r(B_{k})^\al}{w(B_{k})}\int_{B_{k}\setminus B_{k-1}}\mathcal D^{\al}f(y)\,w(y)\,dy \\
    & \leq  C_{n,\al,\Omega}\, \sum_{k\in\Z} \frac{1}{w(B_{k})}\int_{B_{k}\setminus B_{k-1}} |x-y|^\al\, D^{\al}f(y)\,w(y)\,dy \\
    & \leq  C_{n,\al,\Omega}\, \sum_{k\in\Z} \int_{B_{k}\setminus B_{k-1}} \frac{|x-y|^\al}{w(B(x,|x-y|))} D^{\al}f(y)\,w(y)\,dy \\
    & \leq  C_{n,\al,\Omega}\, \int_{\R^n} \frac{|x-y|^\al}{w(B(x,|x-y|))} D^{\al}f(y)\,w(y)\,dy
\end{align*}
where $C_{n,\al,\Omega}=c_n(1-\al)\,\|\Omega\|_{\strt{2ex}L^{\frac{n}{\al},\infty}(\mathbb{S}^{n-1})}$.
\end{proof}

\hfill

\section{Consequences of our main results}

Recall that a weight $w$ is said to be doubling if there exists a constant $D=D(w)$ such that
$$w(B(x,2r))\leq D\,w(B(x,r))$$
for all $x\in\R^n$ and all $r>0$.

%



\begin{theorem}\label{TwLpLq} Let $d$ be a positive constant bigger than one. 
Let $p,q$ be numbers such that $1<p<d$ and $\frac{1}{p}-\frac{1}{q}=\frac{1}{d}$. Let $w$ be a  doubling weight and assume further that there exists a positive constant $c=c(w)$ such that
\begin{equation}\label{lowerAlhfors}
    w(B(x,r))\geq c\,r^d, \qquad  x\in\R^n, r>0, 
\end{equation}
then
$$\|T_wf\|_{L^q(w)}\leq C_{w,p,d}\,\|f\|_{L^p(w)}.$$
\end{theorem}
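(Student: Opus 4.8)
The strategy is to recognize $T_w$ as a potential operator on the space of homogeneous type $(\R^n, |\cdot|, w)$ and invoke the standard weighted fractional-integral theory. First I would rewrite the kernel in a more transparent form. Under hypothesis \eqref{lowerAlhfors} together with the doubling property, one has the two-sided control
$$\frac{|x-y|}{w(B(x,|x-y|))}\;\le\; C\,\frac{|x-y|}{\,\big(w(B(x,|x-y|))\big)^{1-1/d}}\cdot \frac{1}{|x-y|}\;=\;C\,\frac{1}{\big(w(B(x,|x-y|))\big)^{1-1/d}},$$
wait — more carefully: since $w(B(x,|x-y|))\ge c|x-y|^d$, we get $|x-y|\le c^{-1/d}w(B(x,|x-y|))^{1/d}$, so
$$\frac{|x-y|}{w(B(x,|x-y|))}\;\le\;c^{-1/d}\,w\big(B(x,|x-y|)\big)^{\frac1d-1}.$$
Thus $T_w$ is pointwise dominated by the genuine fractional integral operator on the homogeneous space $(\R^n,w)$,
$$T_w f(x)\;\le\;C\int_{\R^n}\frac{f(y)}{w\big(B(x,|x-y|)\big)^{1-1/d}}\,w(y)\,dy\;=:\;C\,\mathcal I^{w}_{1/d}f(x),$$
where I have used that the ball $B(x,|x-y|)$ in the last integral can be replaced by any fixed ball containing $x$ and $y$ up to doubling constants.

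**Main step.** Now I would apply the well-known boundedness of fractional integrals on spaces of homogeneous type: on $(\R^n, d_{\mathrm{euc}}, w)$ with $w$ doubling, the operator $\mathcal I^{w}_{\gamma}$ defined by $\mathcal I^w_\gamma g(x)=\int g(y)\,w(B(x,d(x,y)))^{\gamma-1}\,dw(y)$ maps $L^p(w)\to L^q(w)$ whenever $0<\gamma<1$ and $\tfrac1p-\tfrac1q=\gamma$, with $1<p<1/\gamma$. This is the homogeneous-space analogue of the Hardy–Littlewood–Sobolev theorem (see e.g.\ the results of Gatto–Gutiérrez–Wheeden, or García-Cuerva–Gatto). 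Here $\gamma=1/d$, and the hypotheses $1<p<d$ and $\tfrac1p-\tfrac1q=\tfrac1d$ are exactly $1<p<1/\gamma$ and $\tfrac1p-\tfrac1q=\gamma$. So the conclusion follows immediately with $C_{w,p,d}$ depending on the doubling constant $D(w)$, the constant $c(w)$ from \eqref{lowerAlhfors}, $p$, and $d$.

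**Alternative self-contained route.** If one prefers not to quote the homogeneous-space HLS theorem as a black box, I would instead prove the estimate directly via the standard two-step argument: (i) show the weak-type endpoint bound $\mathcal I^w_{1/d}: L^1(w)\to L^{d',\infty}(w)$ by a Chebyshev/level-set argument, splitting the kernel at a radius $R$ chosen so that the "near" part is controlled by $R\cdot M_w f(x)$-type quantities (using $\tfrac1d$ of the mass) and the "far" part by $\|f\|_{L^1(w)}$ times $R^{-\text{something}}$; combined with the trivial bound into $L^\infty$ when $p$ is pushed up — actually more cleanly, (ii) interpolate the pointwise estimate $\mathcal I^w_{1/d} f(x)\le C\,R\,M_w f(x) + C\,R^{1-d}\|f\|_{L^1(w)}^{?}$... the precise split is $\mathcal I^w_{1/d}f(x)\lesssim R^{1/d}\,M^w f(x)$ plus a tail, optimize in $R$, and feed the result into the Maz'ya/Hedberg trick together with the $L^p(w)$-boundedness of the maximal operator $M_w$ to get $L^p(w)\to L^q(w)$. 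Either way the doubling of $w$ is what makes $M_w$ bounded and makes the kernel estimates go through.

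**Expected obstacle.** The only real subtlety is bookkeeping the comparison of the kernel $|x-y|/w(B(x,|x-y|))$ with $w(B(x,|x-y|))^{1/d-1}$: this requires both \eqref{lowerAlhfors} (for the one inequality actually used) and doubling (to pass freely between $B(x,|x-y|)$, $B(y,|x-y|)$, and dilates thereof inside the level-set estimates). One should also note that no upper Ahlfors bound on $w$ is assumed, so $\gamma=1/d$ is genuinely the scaling exponent that governs the $L^p\!\to\!L^q$ gain; the hypothesis $d>1$ guarantees $0<\gamma<1$ so that the fractional-integration theorem applies. I would write the proof in the alternative self-contained form if the homogeneous-space reference is not already in the paper's bibliography.
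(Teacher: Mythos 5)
Your proposal is essentially correct, and your primary route differs from the paper's. The paper proves the theorem directly by Hedberg's method applied to $T_w$ itself: split $T_wf=T_w^1f+T_w^2f$ at a radius $R$, bound the local part by $C_wR\,M_w^cf(x)$ using dyadic annuli and doubling, bound the far part by H\"older's inequality together with doubling and the lower Ahlfors condition \eqref{lowerAlhfors} to get $C R^{1-d/p}\|f\|_{L^p(w)}$, optimize in $R$ to obtain $T_wf(x)\le C\|f\|_{L^p(w)}^{p/d}\,(M_w^cf(x))^{1-p/d}$, and finish with the $L^p(w)$-boundedness of the centered weighted maximal function. You instead use \eqref{lowerAlhfors} once, at the kernel level, to dominate $T_w$ pointwise by the measure-normalized fractional integral $\mathcal I^w_{1/d}$ on the space of homogeneous type $(\R^n,|\cdot|,w\,dx)$, and then quote the Hardy--Littlewood--Sobolev theorem for such operators. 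That reduction is valid (the kernel bound $|x-y|\le c^{-1/d}w(B(x,|x-y|))^{1/d}$ is exactly \eqref{lowerAlhfors}), and it buys brevity and a cleaner conceptual picture; what the paper's argument buys is self-containedness and explicit control of the constant in terms of $c(w)$, $D(w)$, $p$, $d$. Your ``alternative self-contained route'' is, once the bookkeeping is fixed, precisely the paper's proof.

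Two caveats you should address if you keep the black-box route. First, the HLS theorem for the operator with kernel $w(B(x,|x-y|))^{1/d-1}$ on a general space of homogeneous type requires a reverse-doubling-type property (the near-part dyadic sum $\sum_k w(B(x,2^{-k}R))^{1/d}$ and the far-part sum must converge); this is automatic here because $\R^n$ is connected and unbounded, so doubling implies reverse doubling, but it must be said, and the references you name are not quite on point (Garc\'ia-Cuerva--Gatto treats non-doubling measures with an upper growth condition and the Euclidean kernel $|x-y|^{\alpha-n}$, not the measure-normalized kernel); Kairema's paper, already cited in the bibliography as \cite{AK}, is the appropriate source for this operator in a space of homogeneous type. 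Second, in your sketch the near-part bound should read $R\,M_wf(x)$ for the original kernel (or $w(B(x,R))^{1/d}M_wf(x)$ for the reduced kernel), not $R^{1/d}M_wf(x)$; with that correction the optimization in $R$ reproduces the paper's pointwise estimate and the proof closes as you indicate.
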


\begin{remark}
We want to emphasize the need to assume condition \eqref{lowerAlhfors}. Although it is known that the doubling condition implies the lower Alhfors condition \eqref{lowerAlhfors}, this is only true for bounded spaces $X$, i.e. $\textsf{diam}(X)<\infty$. When $X$ is unbounded, the doubling condition no longer implies the lower Alhfors condition. For simplicity, consider $n=1$, $X=\R$ and $w(x)=|x|^{-\frac{1}{2}}$. Then
$$w(B(x,r))=\int_{x-r}^{x+r}|y|^{-\frac{1}{2}}\,dy.$$
This weight $w$ is an $A_1$ weight and hence doubling (\cite{G} (example 9.1.6)). However, $w$ does not satisfy the lower Alhfors condition. Indeed, suppose that there are constants $c,d>0$ such that
$$\int_{x-r}^{x+r}|y|^{-\frac{1}{2}}\,dy\geq c\,r^d$$
for all $x\in\R$ and all $r>0$. Fix $r>0$ and consider $x>r$, then we have
$$2(\sqrt{x+r}-\sqrt{x-r})\geq c\,r^d\iff \frac{4r}{\sqrt{x+r}+\sqrt{x-r}}\geq c\,r^d,$$
which is a contradiction as $x\to\infty$.
\end{remark}

\begin{corollary}
Suppose $1<p<n$ and $\frac{1}{p}-\frac{1}{p^*}=\frac{1}{n}$. Let $w$ be a doubling weight and $c=c(w)$ be a positive constant such that
\begin{equation}
    w(x)\geq c,   \qquad  a.e. x\in\R^n.
\end{equation}
Then
$$\|T_wf\|_{L^{p^*}(w)}\leq C_{w,p}\,\|f\|_{L^p(w)}.$$

\end{corollary}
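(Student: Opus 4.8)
The plan is to deduce the Corollary from Theorem \ref{TwLpLq} by choosing $d=n$. First I would observe that the hypothesis $w(x)\geq c$ a.e. is exactly the classical lower bound needed to recover the Ahlfors-type condition \eqref{lowerAlhfors} with exponent $d=n$: integrating the pointwise bound over a ball $B(x,r)$ gives
\begin{equation*}
w(B(x,r))=\int_{B(x,r)}w(y)\,dy\geq c\,|B(x,r)|=c\,\omega_n\,r^n,
\end{equation*}
so \eqref{lowerAlhfors} holds with the constant $c\,\omega_n$ and $d=n>1$. Since $n\geq 2$ in the standing assumptions, the requirement $d>1$ is satisfied.

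Next I would check the remaining hypotheses of Theorem \ref{TwLpLq}. The weight $w$ is assumed doubling, which is one of the hypotheses verbatim. The exponents $p,p^*$ satisfy $1<p<n=d$ and $\frac1p-\frac1{p^*}=\frac1n=\frac1d$, which matches the requirement $\frac1p-\frac1q=\frac1d$ with $q=p^*$. Therefore all hypotheses of Theorem \ref{TwLpLq} are met with $d=n$ and $q=p^*$.

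Finally I would simply invoke Theorem \ref{TwLpLq}: it yields
\begin{equation*}
\|T_wf\|_{L^{p^*}(w)}\leq C_{w,p,n}\,\|f\|_{L^p(w)},
\end{equation*}
and renaming $C_{w,p,n}$ as $C_{w,p}$ (since $n$ is fixed) gives exactly the claimed inequality. There is no real obstacle here; the only thing to be careful about is the direction of the implication in the remark preceding the corollary — the doubling condition alone does not give \eqref{lowerAlhfors} on the unbounded space $\R^n$, which is precisely why the extra pointwise lower bound $w\geq c$ is imposed, and it is this hypothesis (not doubling) that supplies \eqref{lowerAlhfors}. So the proof is a three-line verification that the corollary is the special case $d=n$ of the theorem.
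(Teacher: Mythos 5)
Your proof is correct and matches the paper's treatment: the corollary is obtained by applying Theorem \ref{TwLpLq} with $d=n$, $q=p^*$, after noting that the pointwise bound $w\geq c$ integrates to $w(B(x,r))\geq c\,\omega_n r^n$, which is \eqref{lowerAlhfors} with $d=n$. The paper additionally remarks (via the Lebesgue differentiation theorem) that the two conditions are in fact equivalent when $d=n$, but the direction you prove is the one actually needed.
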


Indeed, observe that \eqref{lowerAlhfors} becomes $w(B(x,r))\geq c\,r^n$ whenever $d=n$, and hence by the Lebesgue Differentiation Theorem we have $w(x) \geq c$, almost everywhere. The conditions are equivalent in this case.

From \cite{G} (Proposition 9.1.5), if $w\in A_1$, then $w$ is doubling; more precisely, we have
$$w(B(x,\lambda r))\leq \lambda^n[w]_{A_1}w(B(x,r))$$
for all $x\in\R^n$, all $r>0$ and all $\lambda>0$. Using this, we have the following corollary.

\begin{corollary}
Suppose $1<p<n$ and $\frac{1}{p}-\frac{1}{p^*}=\frac{1}{n}$. Let $w$ be an $A_1$ weight such that for a positive constant $c=c(w)$
\begin{equation}
    w(x)\geq c,   \qquad  a.e.\ x\in\R^n.
\end{equation}
Then
$$\|f\|_{L^{p^*}(w)}\leq C_{w,p,d}\,\|\nabla f\|_{L^p(w)}$$
for all $f\in C_c^\infty(\R^n)$.
\end{corollary}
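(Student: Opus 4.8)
The plan is to combine the two ingredients already assembled in the excerpt: the pointwise subrepresentation of Theorem~\ref{identity}, namely
\[
|f(x)|\le c_n\,[w]_{A_1}\,T_w(|\nabla f|)(x),
\]
valid for every $f\in C_c^\infty(\R^n)$ (such $f$ is Lipschitz with compact support), and the boundedness statement of the preceding corollary, which asserts $\|T_w g\|_{L^{p^*}(w)}\le C_{w,p}\,\|g\|_{L^p(w)}$ for doubling $w$ satisfying the lower bound $w(x)\ge c$ a.e. (this is the $d=n$ case of Theorem~\ref{TwLpLq}, where the hypothesis $w(B(x,r))\ge c\,r^n$ is equivalent, via the Lebesgue differentiation theorem, to $w\ge c$ a.e.).

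First I would note that the hypotheses of the statement are exactly what is needed to run both ingredients simultaneously: $w\in A_1$ gives the subrepresentation of Theorem~\ref{identity} with finite constant $c_n[w]_{A_1}$, and $w\in A_1$ also implies $w$ is doubling (as recalled just above the corollary, with $w(B(x,\lambda r))\le\lambda^n[w]_{A_1}w(B(x,r))$), so together with $w\ge c$ a.e. the previous corollary applies to give $T_w\colon L^p(w)\to L^{p^*}(w)$ boundedly. Then I would simply apply $\|\cdot\|_{L^{p^*}(w)}$ to both sides of the pointwise inequality, using monotonicity of the norm in the pointwise order (both sides are nonnegative), to obtain
\[
\|f\|_{L^{p^*}(w)}\le c_n\,[w]_{A_1}\,\|T_w(|\nabla f|)\|_{L^{p^*}(w)}\le c_n\,[w]_{A_1}\,C_{w,p}\,\||\nabla f|\|_{L^p(w)},
\]
which is the desired inequality with $C_{w,p,d}=c_n\,[w]_{A_1}\,C_{w,p}$. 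The exponent constraint $1<p<n$, $\tfrac1p-\tfrac1{p^*}=\tfrac1n$ is precisely what the corollary requires (with $d=n$), so nothing further is needed there.

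There is no serious obstacle: the only point deserving a line of care is the justification that $|\nabla f|\in L^p(w)$ so that the right-hand side is finite and the chain is not vacuous — this is immediate since $f\in C_c^\infty(\R^n)$ makes $|\nabla f|$ bounded with compact support and $w$ is locally integrable. One should also remark that the previous corollary was stated for doubling $w$ with $w\ge c$ a.e., and here we are specializing to $w\in A_1$ (a subclass), so the invocation is legitimate; the $A_1$ constant enters only through the subrepresentation step. Thus the proof is essentially a two-line concatenation, and I would present it as such.
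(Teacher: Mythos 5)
Your proof is correct and is exactly the argument the paper intends: combine the pointwise bound of Theorem \ref{identity} with the $L^p(w)\to L^{p^*}(w)$ boundedness of $T_w$ from the preceding corollary (the $d=n$ case of Theorem \ref{TwLpLq}), using that $w\in A_1$ implies doubling and $w\ge c$ a.e. gives the lower Ahlfors condition. Nothing further is needed.
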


\begin{proof}[Proof of Theorem \ref{TwLpLq}] 
For the proof we make a variation of the well known method of Hedberg  \cite{H}.
For any $R>0$, we have $T_wf(x)=T_w^1f(x)+T_w^2f(x)$ where
$$T_w^1f(x)=\int_{|y-x|<R}\frac{|y-x|}{w(B(x,|y-x|))}\,f(y)\,w(y)\,dy$$
and
$$T_w^2f(x)=\int_{|y-x|\geq R}\frac{|y-x|}{w(B(x,|y-x|))}\,f(y)\,w(y)\,dy.$$
We first estimate $T^1_wf(x)$ as follows:
\begin{align*}
    T_w^1f(x) & =\sum_{k=1}^\infty\int_{\frac{R}{2^k}<|y-x|\leq\frac{R}{2^{k-1}}} \frac{|y-x|}{w(B(x,|y-x|))}\,f(y)\,w(y)\,dy \\
    & \leq \sum_{k=1}^\infty \frac{R}{2^{k-1}}\cdot\frac{1}{w(B(x,\frac{R}{2^k}))} \int_{|y-x|\leq\frac{R}{2^{k-1}}} f(y)\,w(y)\,dy \\
    & \leq \sum_{k=1}^\infty \frac{R}{2^{k-1}}\cdot\frac{C_w}{w(B(x,\frac{R}{2^{k-1}}))} \int_{B(x,\frac{R}{2^{k-1}})} f(y)\,w(y)\,dy \\
    &\leq C_w\,R\,M^c_wf(x)
\end{align*}
where we have utilized the doubling property of $w$ in the third estimate.
For $T^2_wf(x)$, by H\"older's inequality, we have:
\begin{align*}
    T_w^2f(x) & \leq \|f\|_{L^p(w)}\left(\int_{|y-x|\geq R} \frac{|y-x|^{p'}}{w(B(x,|y-x|))^{p'}}\,w(y)\,dy\right)^\frac{1}{p'} \\
    & = \|f\|_{L^p(w)}\left(\sum_{k=1}^\infty\int_{2^{k-1}R<|y-x|\leq2^kR} \frac{|y-x|^{p'}}{w(B(x,|y-x|))^{p'}}\,w(y)\,dy\right)^\frac{1}{p'} \\
    & \leq R\,\|f\|_{L^p(w)}\left(\sum_{k=1}^\infty\frac{2^{kp'}w(B(x,2^kR))}{w(B(x,2^{k-1}R))^{p'}}\right)^\frac{1}{p'} \\
    & \leq C_{w,p}\,R\,\|f\|_{L^p(w)}\left(\sum_{k=1}^\infty\frac{2^{kp'}}{w(B(x,2^{k-1}R))^{p'-1}}\right)^\frac{1}{p'} \\
    &\leq C_{w,p}\,R^{1-\frac{d}{p}} \|f\|_{L^p(w)}\left(\sum_{k=1}^\infty\frac{2^{kp'}}{2^{(k-1)(p'-1)d}}\right)^\frac{1}{p'}
\end{align*}
where we have utilized the doubling condition on $w$ in the fourth estimate and condition \eqref{lowerAlhfors} in the last estimate. Since $(p'-1)d=\frac{d}{p-1}>\frac{p}{p-1}=p'$, the sum in the last display converges, and hence
$$ T_w^2f(x) \leq C_{w,p,d}\,R^{1-\frac{d}{p'}} \|f\|_{L^p(w)}.$$
This estimate and the estimate for $T^1_wf(x)$ imply
$$T_wf(x) \leq C_{w,p,d}\left[R^{1-\frac{d}{p}} \|f\|_{L^p(w)}+R\,M^c_wf(x)\right].$$
Notice that the right-hand side attains its minimum value at
$$R=\left(\frac{(\frac{d}{p}-1)\|f\|_{L^p(w)}}{M^c_wf(x)}\right)^\frac{p}{d},$$
so we have
$$T_wf(x) \leq C_{w,p,d}\, \|f\|_{L^p(w)}^\frac{p}{d}\,M^c_wf(x)^{1-\frac{p}{d}}.$$
Recall that the centered weighted maximal function is defined by
$$M^c_wf(x)\coloneq\sup_{r>0}\frac{1}{w(B(x,r))}\int_{B(x,r)}f(y)\,w(y)\,dy$$
is bounded from $L^p(w)$ to $L^p(w)$. Therefore, we have
\begin{align*}
    \|T_wf\|_{L^q(w)} & \leq C_{w,p,d}\, \|f\|_{L^p(w)}^\frac{p}{d}\,\|(M^c_wf)^{1-\frac{p}{d}}\|_{L^q(w)} \\
    & = C_{w,p,d}\, \|f\|_{L^p(w)}^\frac{p}{d}\,\|M^c_wf\|_{L^p(w)}^{1-\frac{p}{d}} \\
    & \leq C_{w,p,d}\, \|f\|_{L^p(w)}^\frac{p}{d}\,\|f\|_{L^p(w)}^{1-\frac{p}{d}} = \|f\|_{L^p(w)}.
\end{align*}
\end{proof}

\end{document}